\newtheorem{theorem}{Theorem}[section]
\newtheorem{lemma}[theorem]{Lemma}
\newtheorem{prop}[theorem]{Proposition}
\newtheorem{cor}[theorem]{Corollary}
\theoremstyle{definition}
\newtheorem{definition}[theorem]{Definition}
\theoremstyle{remark}
\newtheorem{remark}[theorem]{Remark}
\newtheorem{remarks}[theorem]{Remarks}
\numberwithin{equation}{section}
\newcommand{\calb}{{\mathcal B}}
\newcommand{\cald}{{\mathcal D}}
\newcommand{\calm}{{\mathcal M}}
\newcommand{\calf}{{\mathcal F}}
\newcommand{\calh}{{\mathcal H}}
\newcommand{\RR}{{\mathbb R}}
\newcommand{\CC}{{\mathbb C}}
 \DeclareMathOperator{\psh}{PSH}
 \DeclareMathOperator{\FPSH}{\calf-PSH}
 \DeclareMathOperator{\FCPSH}{\calf-CPSH}
\DeclareMathOperator{\QB}{QB}
\let\PSH=\psh
\let\cal=\mathcal
\renewcommand{\phi}{\varphi}
\newcommand{\ip}[2]{\langle #1,#2\rangle}
\begin{document}
\title[$\calf$-Plurisubharmonic functions and the M-A operator]{Plurifinely Plurisubharmonic functions and the Monge Amp\`ere Operator}

\author{Mohamed El Kadiri}
\address{
Universit\'e Mohammed V-Agdal \\
D\'epartement de Math\'ematiques
\\Facult\'e des Sciences\\ B.P. 1014, Rabat\\ Morocco
}
\email{elkadiri@fsr.ac.ma}

\author{Jan Wiegerinck}
\address{KdV Institute for Mathematics
\\University of Amsterdam
\\Science Park 904
\\P.O. box 94248, 1090 GE Amsterdam
\\The Netherlands}
\email{j.j.o.o.wiegerinck@uva.nl}

\subjclass[2010]{
32U15, 32U05}
\begin{abstract}We will define the Monge-Amp\`ere operator on finite (weakly) plurifinely plurisubharmonic functions in plurifinely open sets $U\subset\CC^n$ and show that it defines a positive measure.
Ingredients of the proof include a direct proof for bounded strongly plurifinely plurisubharmonic functions, which is based on the fact that such functions can plurifinely locally be written as difference of ordinary plurisubharmonic functions, and an approximation result stating that in the Dirichlet norm weakly plurifinely plurisubharmonic functions are locally limits of plurisubharmonic functions.
As a consequence of the latter, weakly plurifinely plurisubharmonic functions are strongly plurifinely plurisubharmonic outside of a pluripolar set.
\end{abstract}
\maketitle

\section{Introduction}

The plurifine topology $\calf$ on a Euclidean open set $\Omega\subset\CC^n$ is the smallest topology that makes all plurisubharmonic function on $\Omega$ continuous. The construction is completely analogous to the better known fine topology in classical potential theory of H. Cartan. Good references for the latter are \cite{AG, D}. The topology $\calf$ was introduced in \cite{F6}, and studied e.g. by Bedford and Taylor in \cite{BT}, and El Marzguioui and the second author in \cite{E-W1, EW1}.  Notions related to the topology $\calf$ are provided with the prefix $\calf$, e.g. an $\calf$-domain is an $\calf$-open set that is connected in $\calf$.

Just as one introduces finely subharmonic functions on fine domains in $\RR^n$, cf.~Fuglede's book \cite{F1}, one can introduce plurifinely plurisubharmonic functions on $\calf$-domains in $\CC^n$. In case $n=1$ these are just finely subharmonic functions on fine domains in $\RR^2$.
From now on we will assume $n>1$. Then there are two variants, which are up to now only known to be equal in case $n=1$, cf.~\cite{F1}.  \emph{Strongly plurifinely plurisubharmonic functions} are defined as $\calf$-locally decreasing limits of $C$-strongly plurifinely plurisubharmonic functions, which in turn are defined as $\calf$-locally uniform limits of continuous plurisubharmonic functions defined in shrinking Euclidean neighborhoods of the $\calf$-neighborhood under consideration, cf.~ Definition \ref{def3.3}. \emph{Weakly plurifinely plurisubharmonic functions} on the other hand, are defined as $\calf$-upper semicontinuous functions on an $\calf$-open set, with the property that their restrictions to intersections with complex lines are finely subharmonic.  These functions were defined and studied by the first author in \cite{EK} and by El Marzguioui and the second author in \cite{EW1, EW2}. In a joint recent paper with Fuglede, \cite{EFW}, it is observed that ``strong'' implies ``weak'', and many common properties of plurisubharmonic functions are proven for weakly plurifinely plurisubharmonic functions. An overview of all this is in \cite{Wie2012}.

In the sequel we will be mostly concerned with weakly plurifinely plurisubharmonic functions, for which we use the notation $\calf$-plurisubharmonic functions. The cone of $\calf$-plurisubharmonic functions on an $\calf$-open set $U$ is denoted by $\FPSH(U)$.

In Section \ref{sec2} we will establish some approximation results to the effect that weakly plurifinely plurisubharmonic functions can be approximated $\calf$-locally in the Dirichlet norm  by  plurisubharmonic functions. Moreover, if $f\in \FPSH(U)$, then there exists a pluripolar set $E$ such that on $U\setminus E$, each point admits an $\calf$-neighborhood on which $f$ is strongly $\calf$-plurisubharmonic. Note that at present we are still unable to prove that strongly and weakly finely plurisubharmonic functions are the same.

In Section \ref{sec1} we will give a definition of the $\calf$-local Monge-Amp\`ere mass of a finite $f\in\FPSH(U)$, where $U$ is an $\calf$-domain in $\CC^n$, $n\ge 2$. The idea is to use the fact that $f$ can $\calf$-locally at $z\in U$ be written as $u-v$ where $u,v$ are bounded plurisubharmonic functions defined on a ball about $z$, cf.~\cite{EW2, EFW}. For such differences of plurisubharmonic functions the Monge-Amp\`ere may be defined by multilinearity, cf.\ Cegrell and Wiklund,~\cite{CW},

\[(dd^c(u-v))^n=\sum_{p=0}^n\binom{n}{p}(-1)^p(dd^cu)^{n-p}\wedge(dd^cv)^p.
\]
We will show that this definition is independent of the choice of $u$ and $v$.
Thus an $\calf$-local definition of $(dd^cf)^n$ is obtained.

 In Section \ref{sec3} we show that the Monge-Amp\`ere of a finite $\calf$-plurisubharmonic function can be defined and is a positive measure. This is done at first $\calf$-locally for $C$-strongly $\calf$-plurisubharmonic functions.  The results in Section \ref{sec1} combined with the facts that $\calf$ has the quasi-Lindel\"of property and that the Monge-Amp\`ere of bounded plurisubharmonic functions does not charge pluripolar sets, lead to a globally defined positive Monge-Amp\`ere mass. The results of Section \ref{sec2} then allow to pass to finite $\calf$-plurisubharmonic functions. For finite plurisubharmonic functions $u$ on Euclidean domains, we recover the \emph{nonpolar part} $NP(dd^cu)^n$ of the Monge-Amp\`ere measure as defined in \cite[P.236]{BT}. Let us recall that in general this Monge-Amp\`ere mass need not be a Radon measure.

For a set  $A\subset\CC^n$
we  write $\overline A$ for the closure of $A$ in the one point compactification of ${\CC}^n$, $\tilde{A}$ for the $\calf$-closure of
$A$, $\partial_{\cal F}A$ for the $\calf$-boundary
of $A$, and $\partial_fA$ for the fine boundary of $A$
in ${\bf C}^n \cong {\bf R}^{2n}$.

We mention some recent results that we will draw on in this paper.
From \cite{EW2} we will often use that $\calf$-locally bounded ${\cal F}$-plurisubharmonic functions are ${\cal F}$-locally the difference of bounded plurisubharmonic functions defined on a Euclidean open set and its consequence that $\calf$-plurisubharmonic functions are $\calf$-continuous.
In \cite{EFW} many of the classical properties of plurisubharmonic functions are extended to the plurifine situation. We will use in particular that bounded finely subharmonic functions that remain finely subharmonic under composition with complex affine mappings, are $\calf$-plurisubharmonic. We mention also that $\calf$-plurisubharmonic functions are invariant under holomorphic transformations, and that the upper envelop of a locally bounded family of $\calf$-plurisubharmonic functions differs from its $\calf$-upper semicontinuous regularisation at most on a pluripolar set.

\subsection*{Acknowledgement} We are grateful to Iris Smit, who spotted a mistake in an early version of this paper, and to Bent Fuglede and the referee for comments that led to improvements in Section \ref{sec2}.

\section{Local approximation of $\calf$-plurisubharmonic functions}
\label{sec2}
We will prove several density results \ref{thm2.5}, \ref{thm2.5a} for $\calf$-plurisubharmonic functions. Inspired by \cite{F2}, we will employ duality with respect to a pairing $\calb$ of Hilbert spaces. Contrary to the situation in \cite{F2}, the pairing will not be \emph{separated} --we will use throughout the terminology of \cite{KN} --  but this causes no problems.

We denote by $\calm_{n,m}$ the space of complex $n\times m$ matrices and by $\calh_n$ the space of Hermitian matrices in $\calm_{n,n}$. Next $\calh_n^+$ denotes the cone in $\calh_n$ consisting of positive Hermitian matrices.

In this section $\Omega\subset\CC^n=\RR^{2n}$ will be a bounded, Euclidean open domain and $U$ will be an $\calf$-open subset of $\Omega$. We denote by $C^\infty_c(\Omega)$ the space of $C^\infty$ functions on $\Omega$ with compact support in $\Omega$, and by $\cald_1(\Omega)$ the closure of $C^\infty_c(\Omega)$ in the Dirichlet norm $\|u\|=\|\nabla u\|_2$. It is a Hilbert space with inner product $\ip{u}{v}=\int_\Omega \nabla u\nabla v \, dV$. An element of $\cald_1(\Omega)$ is an equivalence class of functions, which contains a quasi-continuous function, cf.~\cite{DL}. The subspace of $\cald_1(\Omega)$ consisting of (equivalence classes of) functions that are quasi-continuous and 0 a.e. in $\Omega\setminus U$ is denoted by $\cald_1^0(\Omega,U)$. Here functions are equivalent if they are equal a.e., hence elements of $\cald_1^0(\Omega, U)$ may be represented by functions which are everywhere 0 on $\Omega\setminus U$. We denote the cones of non-negative elements in $\cald_1(\Omega)$ and $\cald_1^0(\Omega,U)$ by $\cald_1^+(\Omega)$ and $\cald^{+0}_1(\Omega,U)$, respectively. 

Next we introduce cones of (plurifinely) plurisubharmonic functions. We let $\FPSH(\Omega,U)$ be the cone
of functions in ${\cal D}_1(\Omega)$ whose restriction
to $U$ is in $\FPSH(U)$, $\FPSH^-(\Omega,U)=\FPSH(\Omega, U)\cap -\cald_1^+(\Omega)$, and $\FPSH^{-0}(\Omega,U)=\FPSH(\Omega, U)\cap -\cald_1^{+0}(\Omega,U)$.
For $\omega$ a Euclidean open in $\Omega$, we introduce like spaces $\PSH(\omega,\Omega)$, $\PSH^-(\omega,\Omega)$, and $\PSH^{-0}(\omega,\Omega)$
of functions that are, respectively, in $\PSH(\omega)\cap\cald_1(\Omega)$, in $\PSH(\omega)\cap-\cald_1^{+}(\Omega)$, and  in $\PSH(\omega)\cap-\cald_1^{+0}(\omega,\Omega)$.

\smallskip
From \cite{EFW} we recall the following theorem.
\begin{theorem}
 \label{thm2.1}\cite[Theorem 4.2]{EFW}
Suppose that $u\in\cald_1(\Omega)$ with values in $[-\infty,\infty[$ is $\calf$-continuous on $U$. Then the following are equivalent.
\begin {itemize}
 \item[a.] $u$ is $\calf$-plurisubharmonic in $U$.
 \item[b.]  For every $\Lambda=(\lambda_1,...,\lambda_n)\in {\CC}^n$ and every
$\varphi\in {\cald}_1^+(\Omega,U)$
\[
-\sum_{j,k=1}^n\lambda_j{\overline {\lambda_k}}\int_U\partial_ju{\overline \partial}_k\varphi\, dV\ge 0,
\]
where $dV$ is  Lebesgue measure on ${\CC}^n\cong {\RR}^{2n}$.
\end {itemize}
\end{theorem}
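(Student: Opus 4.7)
The plan is to reduce the $2n$-dimensional statement in (b) to the classical one-dimensional Dirichlet-space characterization of finely subharmonic functions via a Fubini argument along complex lines in direction $\Lambda$. The Hermitian form on the left-hand side of (b) is invariant under simultaneous unitary changes of coordinates in $z$ and in $\Lambda$, while $\cald_1^+(\Omega,U)$ and $\calf$-plurisubharmonicity are preserved by such changes, so I may assume $\Lambda=(|\Lambda|,0,\ldots,0)$ with $|\Lambda|>0$, reducing (b) to
\[
-|\Lambda|^{2}\int_{U}\partial_{1}u\,\overline{\partial}_{1}\varphi\,dV\ \ge\ 0,\qquad \varphi\in\cald_1^+(\Omega,U).
\]
Write $z=(t,w')\in\CC\times\CC^{n-1}$ and $U_{w'}=\{t:(t,w')\in U\}$; this slice is fine-open in $\CC$ since the plurifine topology restricted to a complex line is at most as fine as the ordinary fine topology on the line. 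A short integration-by-parts computation shows that $\int \partial_t v\,\overline{\partial}_t \psi\,dA=\tfrac14\int\nabla v\cdot\nabla\psi\,dA$ for real-valued functions in the 1D Dirichlet space (the imaginary part is a curl integral that vanishes on $\cald_1$), so Fubini rewrites (b) as a $w'$-averaged 1D Dirichlet pairing on the slices $U_{w'}$.

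For the implication (a)$\Rightarrow$(b), the defining property of $\calf$-plurisubharmonicity yields that $u_{w'}:=u(\cdot,w')$ is finely subharmonic on $U_{w'}$ for every $w'$; by Fubini both $u_{w'}$ and $\varphi_{w'}:=\varphi(\cdot,w')$ lie in the 1D Dirichlet space for a.e.\ $w'$, with $\varphi_{w'}$ non-negative, quasi-continuous, and vanishing off $U_{w'}$. The classical Dirichlet-space characterization of fine subharmonicity (cf.\ Fuglede \cite{F1,F2}) then gives $\int\nabla u_{w'}\cdot\nabla\varphi_{w'}\,dA\le 0$ on a.e.\ slice, and integration in $w'$ produces (b).

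For the reverse implication, given $(t_0,w'_0)\in U$ I would test (b) against tensor test functions $\varphi(t,w')=\psi(t)\chi(w')$, where $\psi$ is a non-negative 1D fine test function fine-supported in $U_{w'_0}$ and $\chi\in C^{\infty}_{c}(\CC^{n-1})$ is non-negative and supported close to $w'_0$. Once admissibility $\varphi\in\cald_1^{+0}(\Omega,U)$ is verified, Fubini yields
\[
\int_{\CC^{n-1}}\chi(w')\Big(\int\partial_{t}u_{w'}\,\overline{\partial}_{t}\psi\,dA\Big)dV(w')\ \le\ 0,
\]
and varying $\chi$ together with Lebesgue differentiation extracts the slice inequality for a.e.\ $w'$ near $w'_0$. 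Running $\psi$ through a countable dense family, $(t_0,w'_0)$ through $U$, and undoing the unitary reduction produces fine subharmonicity of $u$ on a.e.\ complex line in every direction; combined with $\calf$-continuity of $u$ and the fact that pluripolar exceptional sets are negligible for fine subharmonicity, this is promoted to fine subharmonicity on every complex line, which is (a).

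The main obstacle is the admissibility of the tensor test functions: since $U$ is only $\calf$-open and the fine support of $\psi$ need not be Euclidean-compact in $U_{w'_0}$, the condition that $\psi\chi$ vanish a.e.\ on $\Omega\setminus U$ is not automatic, nor can one generally find a Euclidean polydisc at $(t_0,w'_0)$ contained in $U$. I expect this to be handled by first establishing (b)$\Rightarrow$(a) for $\psi\in C^{\infty}_{c}$ with Euclidean-small support around $t_0$ lying inside an $\calf$-neighborhood of $(t_0,w'_0)$ contained in $U$, and then extending to general fine test functions via an approximation argument using quasi-continuity \cite{DL}, the basic $\calf$-neighborhood structure of $U$, and the negligibility of pluripolar sets.
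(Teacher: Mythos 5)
First, note that the paper does not prove this statement at all: Theorem \ref{thm2.1} is quoted verbatim from \cite[Theorem 4.2]{EFW}, and the proof there does not slice. It works in $\RR^{2n}$ throughout: one uses Fuglede's Dirichlet-space characterization of finely subharmonic functions \cite[Th\'eor\`eme 11]{F2} (this is exactly the $2n$-real-dimensional analogue of the slice inequality you want), combined with the fact that a finely continuous, finely subharmonic function which remains finely subharmonic under composition with invertible $\CC$-affine maps is weakly $\calf$-plurisubharmonic (the result recalled in the paper's introduction). Since $\Delta(u\circ A)=4\sum_{j,k}(AA^*)_{jk}(\partial_j\overline\partial_k u)\circ A$ and $AA^*$ ranges over the nondegenerate positive Hermitian matrices, condition (b) (in its Hermitian-matrix form, i.e.\ Theorem \ref{thm2.2}) is, after a change of variables, precisely the $\RR^{2n}$ Dirichlet inequality for each $u\circ A$, with degenerate directions handled by a limiting argument. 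This route never requires test functions adapted to individual complex lines, which is exactly where your argument runs into trouble.

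The genuine gap in your proposal is the implication (b)$\Rightarrow$(a). The tensor test functions $\varphi=\psi(t)\chi(w')$ are in general \emph{not} in $\cald_1^{+0}(\Omega,U)$, because $U$ is only $\calf$-open, and your proposed repair --- taking $\psi\chi$ with small Euclidean support ``inside an $\calf$-neighborhood of $(t_0,w'_0)$ contained in $U$'' --- cannot work: an $\calf$-neighborhood of a point need not contain any Euclidean neighborhood, so no matter how small the support of $\psi\chi$ is, it will generally meet $\Omega\setminus U$ in a set of positive measure and (b) gives no information about it. Cutting $\psi\chi$ down to $U$ (e.g.\ by multiplying with a truncated plurisubharmonic exhaustion) destroys the tensor structure and with it the Fubini extraction of a per-slice inequality. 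Even granting the slice inequalities for a.e.\ $w'$, the final promotion from ``finely subharmonic on almost every complex line in every direction'' to ``finely subharmonic on every complex line'' is not justified by $\calf$-continuity plus negligibility of pluripolar sets; there is no a.e.-type characterization of fine subharmonicity on a fine domain in a line that would let you pass to the exceptional lines, and the standard way around this is precisely the $\RR^{2n}$ affine-composition criterion used in \cite{EFW}. In the forward direction (a)$\Rightarrow$(b) there are further unproved (though repairable) slicing lemmas: that $\varphi_{w'}$ is quasi-continuous with respect to planar capacity and vanishes quasi-everywhere (not merely a.e.) off $U_{w'}$ for a.e.\ $w'$, and that the pointwise finely subharmonic restriction $u_{w'}$ coincides with the quasi-continuous representative of the sliced $\cald_1$-class. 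Each of these is a substantive capacity-slicing statement that your Fubini argument silently assumes.
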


We will use this result in a different formulation.

\begin{theorem}\label{thm2.2}
Let
$u\in {\cal D}_1(\Omega)$ with values in
$[-\infty,+\infty[$
and ${\cal F}$-continuous on $U$. Then the following conditions are equivalent:
\begin{itemize}
\item[a.] $u$ is $\FPSH(U)$.

\item[b.] For every $M=(a_{ij})\in {\calh}_n^+$  and all
$\varphi\in {\cal D}_1^+(\Omega, U)$
\[
-\sum_{j,k=1}^na_{ij}\int_U\partial_iu{\overline \partial}_j \varphi dV\ge 0.
\]
\end{itemize}
\end{theorem}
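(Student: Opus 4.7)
The plan is to show that Theorem \ref{thm2.2} is a direct reformulation of Theorem \ref{thm2.1}, and the bridge between them is the spectral theorem for positive Hermitian matrices. The content of the two theorems is the same; condition (b) in Theorem \ref{thm2.1} is simply the restriction of condition (b) in Theorem \ref{thm2.2} to rank-one positive Hermitian matrices $M = \Lambda\Lambda^*$, while condition (b) in Theorem \ref{thm2.2} is a positive linear combination of such rank-one conditions.

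For the implication (b) $\Rightarrow$ (a), I would take an arbitrary $\Lambda = (\lambda_1,\ldots,\lambda_n)\in\CC^n$ and set $a_{ij} = \lambda_i\overline{\lambda_j}$. The resulting matrix $M = \Lambda\Lambda^* \in \calh_n^+$ is positive Hermitian (of rank at most one). Applying the hypothesis (b) of Theorem \ref{thm2.2} with this choice of $M$ and with an arbitrary $\varphi \in \cald_1^+(\Omega,U)$ reproduces exactly the hypothesis (b) of Theorem \ref{thm2.1}, and we then conclude $u \in \FPSH(U)$ by invoking Theorem \ref{thm2.1}.

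For the converse (a) $\Rightarrow$ (b), assume $u\in\FPSH(U)$ and fix $M = (a_{ij})\in\calh_n^+$. By the spectral theorem for positive Hermitian matrices we may write
\[
M \;=\; \sum_{\ell=1}^n c_\ell\, v^{(\ell)}(v^{(\ell)})^{*},
\]
where $c_\ell\ge 0$ are the eigenvalues and $v^{(\ell)}\in\CC^n$ are orthonormal eigenvectors, so that $a_{ij} = \sum_\ell c_\ell\, v_i^{(\ell)}\overline{v_j^{(\ell)}}$. Substituting into the quadratic form in condition (b) of Theorem \ref{thm2.2} and exchanging the finite sums with the integral gives
\[
-\sum_{i,j=1}^n a_{ij}\int_U \partial_i u\,\overline{\partial}_j\varphi\,dV
\;=\;\sum_{\ell=1}^n c_\ell\left(-\sum_{i,j=1}^n v_i^{(\ell)}\overline{v_j^{(\ell)}}\int_U \partial_i u\,\overline{\partial}_j\varphi\,dV\right).
\]
Each inner bracket is non-negative by Theorem \ref{thm2.1} applied to $\Lambda = v^{(\ell)}$, and $c_\ell\ge 0$, so the whole sum is non-negative for every $\varphi\in\cald_1^+(\Omega,U)$.

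I do not expect any real obstacle here: the only technical observation is that the rewriting of a positive Hermitian form as a non-negative combination of rank-one positive Hermitian forms (i.e., the spectral decomposition) is a finite-dimensional linear algebra fact that commutes trivially with the integration against $\partial_i u\,\overline{\partial}_j\varphi\in L^1(U)$ (the latter being in $L^1$ because $u,\varphi\in\cald_1(\Omega)$ so $\nabla u,\nabla\varphi\in L^2$ and Cauchy--Schwarz applies). Hence no analytic subtlety intervenes, and the proof reduces to the linear-algebraic equivalence outlined above.
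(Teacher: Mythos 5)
Your proposal is correct and follows essentially the same route as the paper's own proof: rank-one matrices $\Lambda\Lambda^*$ for (b)$\Rightarrow$(a), and the spectral decomposition of $M\in\calh_n^+$ into a non-negative combination of rank-one projectors for (a)$\Rightarrow$(b), reducing both directions to Theorem \ref{thm2.1}.
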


\begin{proof}
 b)$\Longrightarrow$ a): Let $\Lambda=(\lambda_1,...,\lambda_n)\in {\CC}^n$.
We view  $\Lambda$ as an element of $\calm_{n,1}$. Let $\Lambda^*$ be the adjoint of $\Lambda$. Then
$\Lambda\Lambda^*=(\lambda_i{\overline \lambda}_j)\in\calm_{n,n}$.
Now b) implies that for all $\varphi\in {\cal D}_1^+(\Omega,U)$ one has
\[
-\sum_{i,j=1}^n\lambda_i{\overline {\lambda_j}}\int_U\partial_iu{\overline \partial}_j\varphi dV\ge 0,
\]
hence condition a) is satisfied.

a)$\Longrightarrow $b): Let
$u\in {\cal D}_1(\Omega)$ belong to $\FPSH(U)$, $\varphi \in {\cal D}_1^+(\Omega,U)$ and
$M=(a_{ij})\in {\calh_n}^+$. Let  $\alpha_1,...,\alpha_n\ge 0$ be the eigenvalues of $M$, counted with multiplicity, and let
$ \{\Lambda^1,...\Lambda^n\}$ be an orthonormal basis of of eigenvectors of $M$ in
${\CC}^n$ such that
\[M=\sum_{k=1}^n\alpha_k\Lambda^k{\Lambda^k}^*.
\]
If a) holds, then
\[
-\sum_{i,j}a_{ij}\int_U\partial_iu{\overline \partial}_j\varphi dV
=-\sum_k \alpha_k\sum_{ij}\lambda^k_i{\overline {\lambda^k_j}}\int_U\partial_iu{\overline \partial}_j\varphi dV\ge 0
\]
according to Theorem \ref{thm2.1}, hence assertion b).
\end{proof}

Let $X$ be the Hilbert space ${\calh}_n\otimes {\cal D}_1(\Omega)$ endowed with the inner product inherited from ${\cal D}_1(\Omega)$ and let $\calb$ be the ${\RR}$-bilinear form on
$X \times {\cal D}_1(\Omega)$
defined by
\[
\calb(M\otimes u,v)=
-\sum_{j,k=1}^n a_{jk}\int_\Omega\partial_ju{\overline \partial}_k v \, dV
\]
for all $ M\otimes u\in X$
and all $v\in {\cal D}_1(\Omega)$, where $M=(a_{jk})$. We have
\begin{lemma}\label{lem2.5a}
The form $\calb$ is continuous on  $X\times
{\cal D}_1(\Omega)$.

 The map $x\mapsto \calb(x,\cdot)$ maps $X$ continuously \emph{onto} the dual space ${\cal D}_1(\Omega)^*$.
 \end{lemma}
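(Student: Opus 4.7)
The plan is to prove continuity by Cauchy--Schwarz and density of simple tensors, and then to produce surjectivity by combining the Riesz representation theorem with one explicit calculation in which the matrix slot is the identity.

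For continuity, on a simple tensor $M\otimes u \in X$ with $M=(a_{jk})$, I estimate
\[
|\calb(M\otimes u, v)| \;\le\; \sum_{j,k=1}^n |a_{jk}|\,\|\partial_j u\|_{L^2}\,\|\overline\partial_k v\|_{L^2} \;\le\; C_n \|M\|_{\mathrm{HS}}\,\|u\|_{\cald_1}\,\|v\|_{\cald_1},
\]
using the elementary bound $\|\partial_j u\|_{L^2},\,\|\overline\partial_j u\|_{L^2} \le \|u\|_{\cald_1}$ that follows by expanding the Wirtinger derivatives in real coordinates. Since $\|M\otimes u\|_X = \|M\|_{\mathrm{HS}}\|u\|_{\cald_1}$, the form is bounded on simple tensors; by bilinearity and density of finite sums of simple tensors in the Hilbert tensor product $X$, it extends to a continuous bilinear form on $X\times\cald_1(\Omega)$. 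The same estimate says that $x\mapsto\calb(x,\cdot)$ is a bounded linear map $T:X\to\cald_1(\Omega)^*$.

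For surjectivity, given $L\in\cald_1(\Omega)^*$, the Riesz representation theorem produces $w\in\cald_1(\Omega)$ with $L(v)=\ip{w}{v}$ for every $v$. It therefore suffices to hit arbitrary Riesz inner products, and for this I take $M=I\in\calh_n^+$. The key computation is
\[
\calb(I\otimes u, v)=-\sum_{j=1}^n\int_\Omega \partial_j u\,\overline\partial_j v\,dV = -\tfrac14\ip{u}{v}.
\]
Once this identity is established, setting $x := I\otimes(-4w)\in X$ yields $T(x)=L$, proving surjectivity.

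The single delicate point --- and the place where integration by parts is used essentially --- is the identity $\sum_j\int\partial_j u\,\overline\partial_j v\,dV=\tfrac14\ip{u}{v}$. Expanding $\partial_j=\tfrac12(\partial_{x_j}-i\partial_{y_j})$ and $\overline\partial_j=\tfrac12(\partial_{x_j}+i\partial_{y_j})$ gives
\[
\partial_j u\,\overline\partial_j v = \tfrac14\bigl[(\partial_{x_j}u)(\partial_{x_j}v)+(\partial_{y_j}u)(\partial_{y_j}v)\bigr] + \tfrac{i}{4}\bigl[(\partial_{x_j}u)(\partial_{y_j}v)-(\partial_{y_j}u)(\partial_{x_j}v)\bigr].
\]
Summing over $j$ and integrating, the real part gives $\tfrac14\int\nabla u\cdot\nabla v\,dV=\tfrac14\ip{u}{v}$. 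The imaginary part, call it $Q_j(u,v)$, is the one potential obstacle; however, for $u,v\in C_c^\infty(\Omega)$ integration by parts together with commutativity of mixed partials gives $Q_j(u,v)=-Q_j(u,v)$, hence $Q_j=0$. Density of $C_c^\infty(\Omega)$ in $\cald_1(\Omega)$ together with the Cauchy--Schwarz bound extends this cancellation to all $u,v\in\cald_1(\Omega)$. This confirms both that $\calb$ is real-valued (hence lands in $\cald_1(\Omega)^*$) and the explicit formula used in the surjectivity step, completing the proof.
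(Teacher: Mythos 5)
Your proof is correct and follows essentially the same route as the paper: a Cauchy--Schwarz estimate on simple tensors gives continuity, and surjectivity comes from evaluating $\calb$ at $I\otimes u$ to recover the Dirichlet inner product. You are in fact slightly more careful than the paper, which asserts $\ip{u}{v}=\calb(I\otimes u,v)$ without the factor $-\tfrac14$ that your Wirtinger computation (including the integration-by-parts cancellation of the imaginary part) correctly produces; this constant is harmless for surjectivity, as your choice $I\otimes(-4w)$ shows.
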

 \begin{proof} The first statement follows from
\[\left|\calb(M\otimes u,v)\right|\le\sum_{j,k=1}^n|a_{jk}|\int|\nabla u||\nabla v|\, dV\le n
\|M\|_2\|u\|\|v\|.\]
 For the second statement,  just note that for every $u\in \cald_1(\Omega)$ we have $\ip{u}{v}=\calb(I\otimes u,v)$.
 \end{proof}

 \begin{remark}
 It follows immediately that if $\calb(u,v)=0$ for every $u\in X$ then $v=0$, i.e. $X$ \emph{distinguishes points} in $\cald_1(\Omega)$. But $\cald_1(\Omega)$ does not distinguish points in $X$, hence $\calb$ is not separated: Indeed, take any $g\in C^\infty_0(\Omega)$ not identically 0, $u_1=\frac{\partial^2 g}{\partial z_1\partial\overline{z_1}}$, $u_2=\frac{\partial^2 g}{\partial z_2\partial{\overline{z_2}}}$, and let $m^1_{22}=1$, $m^1_{ij}=0$ if $(i,j)\ne(2,2)$, and $m^2_{11}=1$, $m^1_{ij}=0$ if $(i,j)\ne(1,1)$. Then for every $v\in\cald_1(\Omega)$
\[\calb(m^1\otimes u_1-m^2\otimes u_2,v)=0.\]
\end{remark}

Let $B$ be a pairing of topological vector spaces $E$ and $F$. We denote by $w(E,F)$ the weak topology induced  by $F$ via $B$ on $E$ and similarly by $w(F,E)$ the weak topology induced by $E$ via $B$ on $F$.
The dual of a cone $A\subset E$ is denoted by
\[A^*=\{f\in F: \forall x\in A,\ B(x,f)\le 0\}.\]
 Similarly, the dual of a cone $Z$ in $F$ is the subset of $x\in E$ such that for all $f\in Z$, $B(x,f)\le 0$, and is denoted by $Z_*$.
 We will write $A^*_{\ *}$ for $(A^*)_*$.
We now adapt Theorem 16.3 from \cite{KN} to our needs, and we provide its very similar proof for convenience of the reader.

\begin{theorem}\label{thm2.2a}
Let $B$ be a pairing of topological vector spaces $E$ and $F$.
\begin{enumerate}
\item For each cone $A\subset E$, $A^*$ is an $w(E,F)$-closed, convex cone.
\item If $A\subset B$ are cones  in $E$ then $B^*\subset A^*$.
\item If $A\subset E$ is non empty, then $A^*_{\ *}$ is the smallest convex, $w(E,F)$-closed cone in $E$ that contains $A$.
\item For  a nonempty cone $A\subset E$ we have ${A^*_{\ *}}^*=A^*$.
\item If $\{A_t\}$ is a family of cones in $E$, then $(\cup_tA_t)^*=\cap_t A_t^*$.
\item If $\{A_t\}$ is a family of convex $w(E,F)$-closed cones in $E$, then $(\cap_tA_t)^*$ is the smallest convex, $w(F,E)$-closed cone in $F$ that contains $A_t^*$ for all $t$.
\end{enumerate}
\end{theorem}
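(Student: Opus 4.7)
My plan is to handle the six items essentially in order, relying on (1), (3), and (5) as the workhorses and deriving the rest formally. For (1), I would write
$A^{*}=\bigcap_{x\in A}\{f\in F : B(x,f)\le 0\}$. Each set in the intersection is a $w(F,E)$-closed half-space because $f\mapsto B(x,f)$ is $w(F,E)$-continuous by the very definition of that topology, and intersections preserve closedness, convexity, and the cone property. Item (2) is a one-line unfolding of the definition, and item (5) is the quantifier swap $\forall t\,\forall x\in A_t\;\Leftrightarrow\;\forall x\in\bigcup_t A_t$.

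The essential content is item (3), which is where weak Hahn--Banach enters. The inclusion $A\subset A^{*}_{\ *}$ is immediate from the definitions, and $A^{*}_{\ *}$ is a convex $w(E,F)$-closed cone by (1) applied with the roles of $E$ and $F$ interchanged. For minimality I take any convex $w(E,F)$-closed cone $C$ with $A\subset C$ and argue by contradiction: if $x_{0}\in A^{*}_{\ *}\setminus C$, then the geometric Hahn--Banach theorem in the locally convex space $(E,w(E,F))$ yields a $w(E,F)$-continuous linear functional, necessarily of the form $y\mapsto B(y,f)$ for some $f\in F$, and a real number $\alpha$ with $B(x_{0},f)>\alpha\ge\sup_{y\in C}B(y,f)$. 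The decisive step is that because $C$ is a cone, this supremum lies in $\{0,+\infty\}$, and the finiteness of $\alpha$ forces it to equal $0$. Hence $B(y,f)\le 0$ for all $y\in C\supset A$, so $f\in A^{*}$; but then $x_{0}\in A^{*}_{\ *}$ forces $B(x_{0},f)\le 0$, contradicting $B(x_{0},f)>0$. Item (4) then falls out by applying (3) twice: $A\subset A^{*}_{\ *}$ gives $(A^{*}_{\ *})^{*}\subset A^{*}$ via (2), while $A^{*}\subset(A^{*}_{\ *})^{*}$ is just a rereading of the definition of $A^{*}_{\ *}$.

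For (6) I dualise: by the symmetric form of (5),
$\bigl(\bigcup_t A_t^{*}\bigr)_{*}=\bigcap_t(A_t^{*})_{*}$,
and since each $A_t$ is already a convex $w(E,F)$-closed cone, (3) identifies $(A_t^{*})_{*}=A_t$, so this intersection equals $\bigcap_t A_t$. Applying the $F$-analogue of (3) to the cone $\bigcup_t A_t^{*}\subset F$, its smallest enclosing convex $w(F,E)$-closed cone is $\bigl(\bigl(\bigcup_t A_t^{*}\bigr)_{*}\bigr)^{*}=\bigl(\bigcap_t A_t\bigr)^{*}$, as asserted. The main obstacle is the Hahn--Banach step in (3), and more specifically the cone-plus-finite-$\alpha$ trick that pins the separating functional to $\le 0$ on all of $C$; the remaining work is purely formal duality bookkeeping, symmetric on both sides of the pairing, and the fact that the pairing need not be separated never interferes because every assertion concerns duals of cones rather than identifications of points.
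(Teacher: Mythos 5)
Your proposal is correct and follows essentially the same route as the paper: items (1), (2), (5) by direct unwinding of definitions, item (3) by Hahn--Banach separation in the weak topology (where your explicit cone-normalization of the separating functional is the step the paper compresses into the assertion that $g|_C\le 0$ and $g(x)=1$), and items (4), (6) by formal duality from (3) and (5). The only discrepancy is notational: you correctly place $A^{*}$ in the $w(F,E)$-topology on $F$, where the theorem's statement of (1) writes $w(E,F)$.
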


\begin{proof} Statements (1) and (2) are obvious. For (3), let $C$ be the smallest $w(E,F)$-closed cone that contains $A$, then $C\subset A^*_{\ *}$. If $x\notin C$ then in view of the Hahn-Banach theorem,  there exists a $w(E,F)$ continuous functional $g$ on $E$ such that $g| C\le 0$ and $g(x)=1$. There exists $f\in F$ with $g(y)=B(y,f)$ for $y\in E$, cf.~\cite[Theorem 16.2]{KN}.
Then $f\in A^*$, hence $x\notin A^*_{\ *}$. Next, (4) follows immediately from (1) and the dual of (3). Now (5) is clear, as $B(x,f)\le 0$ for $x\in \cup_tA_t$ if and only if $f\in\cap_tA_t^*$. Finally, by (3) $A_t={A_t}^*_{\ *}$, so that by (5)
\[(\cap_tA_t)^*=(\cap_t{A_t}^*_{\ *})^*=((\cup_t {A_t}^*)_*)^*,\]
which equals the $w(E,F)$-closure of the convex hull of $\cup_tA_t^*$, and (6) is proven.
\end{proof}

We write $\Gamma(U)$ for the convex cone
${\calh}_n^+ \otimes{\cal D}_1^{+0}(\Omega,U)\subset
X$ generated by the
elements of $X$ of the form $M\otimes u$
where $M\in {\calh}_n^+$ and $u\in {\cal D}_1^{+0}(\Omega,U)$.

 Next we introduce
${\cal C}_{\cal F}(U)$ the set of
${\cal F}$-continuous functions on $U$ with values in
$[-\infty,+\infty[$. Let  $\Gamma(U)^*$ be the dual
of the convex cone $\Gamma(U)\subset X$ relative to $\calb$.
Then according to Theorem \ref{thm2.2}, one has
\begin{equation}\label{eq2.2}\Gamma(U)^*\cap{\cal C}_{\cal F}(U)=\FPSH(\Omega,U).
\end{equation}

As this holds for all $\calf$-open $U$ in $\Omega$, we have in view of \cite[Proposition 3.14]{EFW} for a Euclidean open $\omega\subset \Omega$, that $\FPSH(\Omega,\omega)=\PSH(\Omega,\omega)$.

\begin{remarks}\label{rem2.3}\ In view of \cite[Th\'eor\`eme 11]{F2}, respectively \cite[Proposition 6]{F2} and Theorem \ref{thm2.2} the following hold:
\begin{itemize}
\item[a.] We have $\Gamma(U)^*\subset {\cal S}(\Omega,U)$,
the cone of functions in ${\cal D}_1(\Omega)$ that are
${\RR}^{2n}$-q.e. subharmonic in $U$ .
\item[b.] For $v\in {\cal D}_1(\Omega)$ The statements \lq$v$  belongs to
 $\Gamma(\omega)^*$\rq\ and \lq for all
$\Lambda=(\lambda_1,...,\lambda_n)\in {\bf C}^n$, the  distribution
$\sum_{j,k=1}^n\lambda_j{\overline {\lambda_k}}
\frac{\partial^2v}{{\partial z_j}{\partial {\overline z_k}}}$ is a positive measure
on $\omega$\rq\ are equivalent.
\end{itemize}
\end{remarks}
Recall from
\cite[Proposition 8]{F2} that for finely open sets $U\subset\Omega$, hence \`a fortiori for plurifinely open set $U\subset\Omega$:
\begin{equation}\label{eq2.3a}
{\cal D}_1^{+0}(\Omega,U)=\cap_\omega{\cal D}_1^{+0}(\Omega,\omega).
\end{equation}
Here the intersection runs over all (Euclidean) open sets $\omega$ with
$U\subset \omega\subset \Omega$.

\begin{prop}\label{prop2.3} For all open sets
$\omega\subset \Omega$, we have
\[\Gamma(\omega)^*=\PSH(\Omega,\omega).
\]
\end{prop}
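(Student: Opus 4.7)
The plan is to deduce both inclusions directly from Remark~\ref{rem2.3}(b), which translates membership in $\Gamma(\omega)^*$ into positivity of the Levi forms of $v$ in every complex direction, taken as distributions on $\omega$.

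For the inclusion $\PSH(\Omega,\omega)\subset\Gamma(\omega)^*$, I take $v\in\PSH(\Omega,\omega)$, so $v\in\cald_1(\Omega)$ and $v\vert_\omega$ is plurisubharmonic. Then for each $\Lambda=(\lambda_1,\dots,\lambda_n)\in\CC^n$ the distribution
\[
\sum_{j,k=1}^n\lambda_j\overline{\lambda_k}\,\frac{\partial^2 v}{\partial z_j\,\partial\overline{z_k}}
\]
is a positive Radon measure on $\omega$ (this is a standard consequence of plurisubharmonicity, proved by convolving $v$ with smoothing kernels and passing to the limit). Remark~\ref{rem2.3}(b) then gives $v\in\Gamma(\omega)^*$.

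For the reverse inclusion $\Gamma(\omega)^*\subset\PSH(\Omega,\omega)$, I take $v\in\Gamma(\omega)^*$. By Remark~\ref{rem2.3}(b), for every $\Lambda\in\CC^n$ the distribution above is a positive measure on $\omega$. This is precisely the classical distributional characterization of plurisubharmonicity: there exists a plurisubharmonic function $\tilde v$ on $\omega$, unique up to equality on polar sets, with $\tilde v=v$ a.e.\ on $\omega$ (cf.\ H\"ormander, or Klimek's book on pluripotential theory). Since elements of $\cald_1(\Omega)$ are equivalence classes modulo a.e.\ equality, we may take $\tilde v$ as a representative of $v$ on $\omega$, and then $v\in\PSH(\Omega,\omega)$.

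The only step that is more than a bookkeeping manoeuvre is the classical fact invoked in the reverse inclusion, namely that a distribution on a Euclidean open set $\omega$ whose Levi form is a positive measure in every direction coincides a.e.\ with a plurisubharmonic function. This is well established, and Remark~\ref{rem2.3}(a) already supplies a subharmonic representative q.e., so upgrading it to a plurisubharmonic one using the stronger information from Remark~\ref{rem2.3}(b) is routine. I would therefore expect this step to be the one requiring the most careful citation, but it is not a genuine obstacle.
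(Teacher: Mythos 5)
Your proof is correct and follows essentially the same route as the paper: the forward inclusion is the content of equation \eqref{eq2.2} (equivalently, the direct Levi-form verification you give), and the reverse inclusion combines Remarks \ref{rem2.3}(a) and (b) exactly as the paper does, using (a) to obtain an upper semicontinuous (subharmonic) representative and (b) to upgrade it to a plurisubharmonic one.
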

\begin{proof} The inclusion $\PSH(\Omega,\omega)\subset \Gamma(\omega)^*$
is contained in \eqref{eq2.2}.
For the other direction, let  $f\in \Gamma(\omega)^*$.
According to Remarks \ref{rem2.3}-a we have $\Gamma(\omega)^*\subset {\cal S}(\Omega,\omega)$,
hence $f$  is, or rather admits a representative, which is upper semicontinuous in $\omega$,
therefore, in view of Remarks \ref{rem2.3}-b, $f\in \PSH(\omega)$. We conclude that $f\in \PSH(\Omega,\omega)$.
\end{proof}

\begin{prop}\label{prop2.4} For all $\calf$-open
$U\subset \Omega$, we have
$$\Gamma(U)=\cap_\omega \Gamma(\omega),$$
where the intersection is taken over all Euclidean open sets $\omega$ with
$U\subset \omega\subset \Omega$.
\end{prop}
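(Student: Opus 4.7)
The inclusion $\Gamma(U)\subset\bigcap_\omega\Gamma(\omega)$ is the easy half, and I would dispose of it first: whenever $U\subset\omega$, any $u\in\cald_1^{+0}(\Omega,U)$ also lies in $\cald_1^{+0}(\Omega,\omega)$, so every generator $M\otimes u$ of $\Gamma(U)$ sits in $\Gamma(\omega)$ and hence in the intersection.

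For the substantive direction, I will exploit the trace map $\tau\colon X\to\cald_1(\Omega)$ determined $\RR$-linearly by $M\otimes u\mapsto(\operatorname{tr}M)\,u$. This $\tau$ is intrinsic (independent of the chosen representation) and sends $\Gamma(V)$ into $\cald_1^+(\Omega)$, because $\operatorname{tr}M\ge 0$ for $M\in\calh_n^+$. The observation I need is that any element of $\Gamma(V)$ admits a rank-one presentation $x=\sum_k\Lambda_k\Lambda_k^*\otimes u_k$ with $|\Lambda_k|=1$ and $u_k\in\cald_1^{+0}(\Omega,V)$: this is obtained by applying the spectral decomposition $M_i=\sum_\ell\alpha_\ell^i\Lambda_\ell^i(\Lambda_\ell^i)^*$ to each $M_i\in\calh_n^+$, with orthonormal eigenvectors $\Lambda_\ell^i$ and non-negative eigenvalues $\alpha_\ell^i$, and then absorbing the $\alpha_\ell^i$'s into the $u_i$'s. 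In any such presentation $\tau(x)=\sum_k u_k$, so $\tau(x)$ itself lies in $\cald_1^{+0}(\Omega,V)$.

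Given $x\in\bigcap_\omega\Gamma(\omega)$, applying the above remark inside each $\Gamma(\omega)$ yields $\tau(x)\in\cald_1^{+0}(\Omega,\omega)$ for every Euclidean open $\omega$ with $U\subset\omega\subset\Omega$, and the formula \eqref{eq2.3a} then pins $\tau(x)$ down to $\cald_1^{+0}(\Omega,U)$. I would next fix once and for all a rank-one presentation $x=\sum_j\Lambda_j\Lambda_j^*\otimes u_j$ coming from $x\in\Gamma(\Omega)$, with $|\Lambda_j|=1$ and $u_j\in\cald_1^+(\Omega)$. The decisive step is the pointwise estimate $0\le u_j\le\sum_{j'}u_{j'}=\tau(x)$ a.e.; combined with the vanishing $\tau(x)=0$ a.e.\ on $\Omega\setminus U$ just established, it forces each $u_j$ to vanish a.e.\ off $U$. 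Since the $u_j$'s already belong to $\cald_1^+(\Omega)$ and hence admit quasi-continuous representatives, this places $u_j\in\cald_1^{+0}(\Omega,U)$, and the very same representation then witnesses $x\in\Gamma(U)$.

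The main obstacle I anticipate is the severe non-uniqueness of the decomposition $x=\sum M_i\otimes u_i$, which defeats any naive attempt to match $M_i$'s and $u_i$'s compatibly across different $\omega$'s. The trace sidesteps this by producing an intrinsic non-negative dominant $\tau(x)$ whose support can be controlled by \eqref{eq2.3a}; once this control is in hand, the rank-one normalization and the pointwise bound $u_j\le\tau(x)$ transfer the support condition to each individual summand in a single fixed representation, which is exactly what is needed to land $x$ back in $\Gamma(U)$.
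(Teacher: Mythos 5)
Your proof is correct and follows essentially the same route as the paper: both arguments apply the trace functional (tensored with the identity) to obtain an intrinsic non-negative element $\sum_j \operatorname{tr}(M_j)u_j$ of $\cald_1^+(\Omega)$, use positivity of the summands to control each $u_j$ by this sum, and invoke \eqref{eq2.3a} to land in $\cald_1^{+0}(\Omega,U)$. Your rank-one normalization and the order in which you apply \eqref{eq2.3a} (to $\tau(x)$ first rather than to each $u_j$ per $\omega$) are only cosmetic variations.
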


\begin{proof} The inclusion $\Gamma(U)\subset \cap_\omega \Gamma(\omega)$
is clear. For the other inclusion we proceed as follows.
Let $x=\sum_{j\in J} M_j\otimes u_j\in
\cap_\omega \Gamma(\omega)\subset X$, with $J$ finite, $M_j\in {\calh}_n^+\setminus\{0\}$,
and $ u_j\in {\cal D}_1^+(\Omega)$ for all
$j\in J$. Let $g$ be the  ${\RR}$-linear form on ${\calh}_n$
defined by  $g( M)=tr(M)=\sum_{i=1}^na_{ii},$
for all $M=(a_{ij})\in {\calh}_n$. Then $g(M)> 0$ where
$M\in {\cal H}_n^+\setminus \{0\}$.

Now let $\omega\subset\Omega$ be open and such that $U \subset \omega$. Because $x\in {\Gamma}(\omega)$, we can write
$x=\sum_{k\in K} L_k \otimes \tilde u_k\in
\Gamma(\omega)$ with $K$ finite, $L_k\in {\calh}_n^+$, and
$\tilde u_k \in {\cal D}_1^{+0}(\Omega,\omega)$
for all $k\in K$. For every linear
continuous form $h$ on ${\cal D}_1(\Omega)$, we have
\[h(\sum_{j\in J}tr(M_j) u_j)=(g\otimes h)x
=h(\sum_{k\in K}tr(L_k) \tilde u_k).\]
Therefore
\[\sum_{j\in J}tr(M_j) u_j
=\sum_{k\in K}tr(L_k) \tilde u_k.\]
As the functions
$\tilde u_k$ are q-e. $0$ in $\Omega\setminus \omega$ and because $tr(M_j), tr(L_k)>0$ and the functions $u_j$
are non-negative, we infer that the latter are also 0 q-e.\ in  $\Omega\setminus \omega$.
Hence for all
$j\in J$, we have $u_j\in \cap_{\omega} {\cal D}_1^{+0}(\Omega,\omega)$, which equals $\cald^{+0}(\Omega, U)$ by \eqref{eq2.3a}. We conclude that
$x\in {\Gamma}(U)$. Hence
$\Gamma(U)=\cap_{\omega}\Gamma(\omega)$.
\end{proof}

\begin{theorem}\label{thm2.5}
\[\FPSH(\Omega,U)
={\overline {\cup_\omega \PSH(\Omega,\omega)}}\cap {\cal C}_{\cal F}(U),
\]
where the union is over all Euclidean open $\omega$ with
$U\subset \omega\subset \Omega$, and the closure is in the
 sense of the strong (i.e.\ norm) topology in the Hilbert space
  ${\cal D}_1(\Omega)$.
\end{theorem}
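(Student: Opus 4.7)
The plan is to establish $\Gamma(U)^* = \overline{\cup_\omega \PSH(\Omega,\omega)}$ in the strong (equivalently, weak) topology of $\cald_1(\Omega)$, after which intersection with ${\cal C}_\calf(U)$ and invocation of \eqref{eq2.2} yield the claim. First note that $\bigcup_\omega \PSH(\Omega,\omega)$ is a convex cone: given $f_i \in \PSH(\Omega,\omega_i)$, any nonnegative linear combination lies in $\PSH(\Omega,\omega_1 \cap \omega_2)$, and $\omega_1 \cap \omega_2$ is a Euclidean open set containing $U$. By Mazur's theorem its strong and weak closures agree, and by Lemma~\ref{lem2.5a} (surjectivity of $x \mapsto \calb(x,\cdot)$) the weak topology $w(\cald_1(\Omega),X)$ coincides with the usual weak topology on the Hilbert space $\cald_1(\Omega)$.

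The easy inclusion: Proposition~\ref{prop2.4} yields $\Gamma(U) \subset \Gamma(\omega)$, hence by Theorem~\ref{thm2.2a}(2) and Proposition~\ref{prop2.3},
\[
\bigcup_\omega \PSH(\Omega,\omega) = \bigcup_\omega \Gamma(\omega)^* \subset \Gamma(U)^*.
\]
Since $\Gamma(U)^*$ is $w$-closed (Theorem~\ref{thm2.2a}(1)), hence norm-closed, $\overline{\cup_\omega \PSH(\Omega,\omega)} \subset \Gamma(U)^*$. Intersecting with ${\cal C}_\calf(U)$ and using \eqref{eq2.2} gives $\overline{\cup_\omega \PSH(\Omega,\omega)} \cap {\cal C}_\calf(U) \subset \FPSH(\Omega,U)$.

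For the reverse inclusion I would apply Theorem~\ref{thm2.2a}(6) to the $w$-closed convex cones $\overline{\Gamma(\omega)}^w = \Gamma(\omega)^*_{\ *}$ in $X$, whose duals are $\Gamma(\omega)^* = \PSH(\Omega,\omega)$ by Theorem~\ref{thm2.2a}(4) combined with Proposition~\ref{prop2.3}. This yields
\[
\Big(\bigcap_\omega \overline{\Gamma(\omega)}^w\Big)^* = \overline{\bigcup_\omega \PSH(\Omega,\omega)}.
\]
The crux is then the identification $\bigl(\bigcap_\omega \overline{\Gamma(\omega)}^w\bigr)^* = \Gamma(U)^*$, equivalently $\overline{\Gamma(U)}^w = \bigcap_\omega \overline{\Gamma(\omega)}^w$. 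One inclusion is immediate from Proposition~\ref{prop2.4}; the reverse --- a \emph{closure commutes with intersection} statement --- is what I expect to be the main obstacle.

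To overcome it, I would upgrade the argument of Proposition~\ref{prop2.4} to weak limits. Given $x \in \bigcap_\omega \overline{\Gamma(\omega)}^w$, apply the $w$-continuous trace map $T \colon X \to \cald_1(\Omega)$, $T(M \otimes u) = \operatorname{tr}(M) u$. Because $\cald_1^{+0}(\Omega,\omega)$ is a norm-closed convex cone (hence $w$-closed) and $T$ sends $\Gamma(\omega)$ into it, we obtain $T(x) \in \bigcap_\omega \cald_1^{+0}(\Omega,\omega) = \cald_1^{+0}(\Omega,U)$ by \eqref{eq2.3a}. Combining this projection-level information with the spectral (rank-one) decomposition of positive Hermitian matrices exploited in the proof of Theorem~\ref{thm2.2} should allow a weak approximation of $x$ by elements of $\Gamma(U) = \bigcap_\omega \Gamma(\omega)$, establishing the required identification. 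Once this is in hand, intersecting $\Gamma(U)^* = \overline{\cup_\omega \PSH(\Omega,\omega)}$ with ${\cal C}_\calf(U)$ and invoking \eqref{eq2.2} finishes the proof.
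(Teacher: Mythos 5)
Your overall architecture is the paper's: the pairing $\calb$, the identities $\Gamma(\omega)^*=\PSH(\Omega,\omega)$ (Proposition~\ref{prop2.3}) and $\Gamma(U)=\cap_\omega\Gamma(\omega)$ (Proposition~\ref{prop2.4}), the bipolar calculus of Theorem~\ref{thm2.2a}, and the passage from weak to norm closure via Lemma~\ref{lem2.5a} and Mazur/Hahn--Banach. Your ``easy'' inclusion is correct and complete. The genuine gap is in the reverse inclusion: you reduce everything to the identity $\bigcap_\omega\overline{\Gamma(\omega)}^{\,w}=\overline{\Gamma(U)}^{\,w}$, label it ``the main obstacle,'' and then offer only a programme rather than a proof. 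That programme does not work as sketched. The trace map $T(M\otimes u)=\operatorname{tr}(M)\,u$ is norm-continuous on $X$, but it is \emph{not} $w(X,\cald_1(\Omega))$-continuous: the $w(X,\cald_1(\Omega))$-continuous functionals on $X$ are exactly those of the form $\calb(\cdot,v)$, and the Remark following Lemma~\ref{lem2.5a} exhibits nonzero elements of $X$ annihilated by every $\calb(\cdot,v)$ on which a functional of the form $g\otimes h$ need not vanish. So from $x\in\bigcap_\omega\overline{\Gamma(\omega)}^{\,w}$ you cannot conclude $T(x)\in\bigcap_\omega\cald_1^{+0}(\Omega,\omega)$. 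Worse, the mechanism of Proposition~\ref{prop2.4} depends on a finite-sum representation $x=\sum_k L_k\otimes\tilde u_k$ and on the positivity of each summand; a general element of the weak closure has no such representation, and the trace only sees the aggregate $\sum_k\operatorname{tr}(L_k)\tilde u_k$, not the individual $\tilde u_k$ that the argument actually needs. Nothing in your last paragraph closes this.

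For comparison, the paper never introduces the closures $\overline{\Gamma(\omega)}^{\,w}$ at all: it applies Theorem~\ref{thm2.2a}(6) directly with $A_\omega=\Gamma(\omega)$ and uses the exact identity $\Gamma(U)=\cap_\omega\Gamma(\omega)$ of Proposition~\ref{prop2.4} to get $\Gamma(U)^*=(\cap_\omega\Gamma(\omega))^*=\overline{\cup_\omega\Gamma(\omega)^*}$, and then intersects with ${\cal C}_{\cal F}(U)$ via \eqref{eq2.2}. You have in effect put your finger on the one hypothesis of part (6) that this use presupposes, namely that the cones $\Gamma(\omega)$ are convex and $w(X,\cald_1(\Omega))$-closed; but your cure --- replacing each $\Gamma(\omega)$ by its weak closure --- manufactures a strictly harder ``closure commutes with intersection'' problem and then leaves it unsolved. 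Either follow the paper and apply (6) to the $\Gamma(\omega)$ themselves (addressing their closedness if you wish to be scrupulous), or supply an actual proof that $\bigcap_\omega\overline{\Gamma(\omega)}^{\,w}\subset\overline{\Gamma(U)}^{\,w}$. As written, the proposal does not prove the theorem.
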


\begin{proof} Proposition \ref{prop2.4} gives $\Gamma(U)=\cap_{\omega}\Gamma(\omega)$. Theorem \ref{thm2.2a} (6)
yields that
\[\FPSH(\Omega,U)=\Gamma(U)^*\cap {\cal C}_{\cal F}(U)=(\cap_\omega \Gamma(\omega))^*\cap{\cal C}_{\cal F}(U)\]
\[={\overline {\cup_\omega \Gamma(\omega)^*}}\cap{\cal C}_{\cal F}(U)
={\overline {\cup_\omega \PSH(\Omega,\omega)}}\cap{\cal C}_{\cal F}(U).
\]
Here at first the closure of the convex set $\cup_\omega \PSH(\Omega,\omega)$ is in $w(\cald_1(\Omega), X)$, the topology induced by $X$ on ${\cal D}_1(\Omega)$. This topology coincides with the usual weak topology on $\cald_1(\Omega)$ because of Lemma \ref{lem2.5a} and next, by the Hahn-Banach theorem this closure equals the closure in the strong topology in ${\cal D}_1(\Omega)$. This proves the theorem.
\end{proof}

We next present more useful ``bounded'' versions of Theorem \ref{thm2.5}.

\begin{theorem}\label{thm2.5a}
\begin{equation}\label{eq2.5a2}\FPSH^-(\Omega,U)
=\overline {\cup_\omega \PSH^{-}(\Omega,\omega)}\cap {\cal C}_{\cal F}(U),
\end{equation}
\begin{equation}\label{eq2.5a1}\FPSH^{-0}(\Omega,U)
=\overline {\cup_\omega \PSH^{-0}(\Omega,\omega)}\cap {\cal C}_{\cal F}(U),
\end{equation}

where again unions are taken over all Euclidean open $\omega$ with
$U\subset \omega\subset \Omega$, and the closure is in the
 sense of the strong (i.e.\ norm) topology in the Hilbert space
  ${\cal D}_1(\Omega)$.
\end{theorem}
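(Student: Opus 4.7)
The plan is to adapt the duality scheme of Theorem~\ref{thm2.5} by augmenting the cone $\Gamma$ with a term that encodes the sign (and, for \eqref{eq2.5a1}, the vanishing) constraint.

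\emph{Inclusions $\supset$.} The cone $-\cald_1^+(\Omega)$ is norm-closed in $\cald_1(\Omega)$ and contains each $\PSH^-(\Omega,\omega)$, so the strong closure on the right-hand side of \eqref{eq2.5a2} remains $\le 0$; combined with Theorem~\ref{thm2.5} and the $\mathcal{C}_\mathcal{F}(U)$-intersection, this yields the inclusion. For \eqref{eq2.5a1} the argument is analogous with $-\cald_1^{+0}(\Omega,\omega)$ in place of $-\cald_1^+(\Omega)$, invoking \eqref{eq2.3a} to conclude that the limit vanishes a.e.\ outside $U$.

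\emph{Inclusions $\subset$.} For \eqref{eq2.5a2} I would introduce the augmented cone
\[
\Gamma^-(\omega) := \Gamma(\omega) + I\otimes \mathcal{N}(\Omega) \subset X,
\]
where $\mathcal{N}(\Omega)$ is the cone of (classes of) $u\in \cald_1(\Omega)$ with $\Delta u \ge 0$ distributionally; such $u$ are automatically non-positive by the maximum principle on $H^1_0(\Omega)$. An integration by parts gives
\[
\calb(I\otimes u, f) = -\tfrac{1}{4}\ip{u}{f} = \tfrac{1}{4}\int_\Omega f\,\Delta u\, dV,
\]
which is $\le 0$ for every $u\in\mathcal{N}(\Omega)$ precisely when $f \le 0$ a.e. Together with Proposition~\ref{prop2.3} this yields $\Gamma^-(\omega)^* = \PSH^-(\Omega,\omega)$, and with \eqref{eq2.2}, $\Gamma^-(U)^*\cap\mathcal{C}_\mathcal{F}(U) = \FPSH^-(\Omega, U)$. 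Once the analogue of Proposition~\ref{prop2.4}, namely $\Gamma^-(U) = \cap_\omega\Gamma^-(\omega)$, is established, Theorem~\ref{thm2.2a}(6) yields \eqref{eq2.5a2} exactly as in the proof of Theorem~\ref{thm2.5}. For \eqref{eq2.5a1} the same scheme runs with $\mathcal{N}(\Omega)$ replaced by $\mathcal{N}(\omega,\Omega) := \{u\in\cald_1(\Omega) : \Delta u \ge 0 \text{ on }\omega\}$, whose $\calb$-dual (via integration by parts confined to $\omega$) is $-\cald_1^{+0}(\Omega, \omega)$.

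\emph{Main obstacle.} The critical step is the intersection identity $\Gamma^-(U) = \cap_\omega\Gamma^-(\omega)$. Given $x \in \cap_\omega\Gamma^-(\omega)$ with decompositions $x = \gamma_\omega + I\otimes s_\omega$, applying the trace functional $g(M) = \mathrm{tr}(M)$ as in the proof of Proposition~\ref{prop2.4} produces $V = W^\omega + n\,s_\omega$, where $V = (\mathrm{tr}\otimes\mathrm{id})(x)$ is fixed, $W^\omega\in\cald_1^{+0}(\Omega,\omega)$, and $s_\omega\in\mathcal{N}(\Omega)$; the $\omega=\Omega$ case furnishes $V = U + n\,s$ for some fixed non-negative $U \in \cald_1^+(\Omega)$ and $s \in \mathcal{N}(\Omega)$, forcing $s_\omega = s + U/n$ on $\Omega\setminus\omega$. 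Extracting a common $s^0\in\mathcal{N}(\Omega)$ such that $s_\omega - s^0\in\cald_1^{+0}(\Omega,\omega)$ for every $\omega$, so that $x - I\otimes s^0 \in \cap_\omega\Gamma(\omega) = \Gamma(U)$ by Proposition~\ref{prop2.4}, is the main technical hurdle: the natural candidate $s^0 = s + U/n$ satisfies the boundary matching but is not a priori in $\mathcal{N}(\Omega)$ (since $\Delta U$ is unconstrained), and remedying this requires a more delicate argument combining \eqref{eq2.3a} with specific regularity of subharmonic functions in $\cald_1(\Omega)$.
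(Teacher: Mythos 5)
Your overall strategy---augmenting the dual-cone machinery of Theorem \ref{thm2.5} by an auxiliary object whose $\calb$-dual realizes the sign (resp.\ vanishing) constraint---is the right one and is essentially what the paper does, but your proof has a genuine gap exactly where you flag it, and the gap is self-inflicted by your choice of \emph{Minkowski sum}. You set $\Gamma^-(\omega)=\Gamma(\omega)+I\otimes\mathcal N(\Omega)$ and then need $\cap_\omega\Gamma^-(\omega)=\Gamma^-(U)$; an intersection of Minkowski sums is in general strictly larger than the sum of the intersections, and your attempt to extract a common summand $s^0\in\mathcal N(\Omega)$ from the $\omega$-dependent decompositions does not close (as you concede, the natural candidate need not be subharmonic). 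The paper sidesteps this entirely by adjoining the auxiliary cone as a \emph{union} rather than a sum: it works with $\Gamma(\omega)\cup\bigl(-\cald_1^+(\Omega)\bigr)_*$, where $\bigl(-\cald_1^+(\Omega)\bigr)_*$ is the abstract predual cone in $X$. Duality cannot tell the two constructions apart, since $(A\cup B)^*=(A+B)^*=A^*\cap B^*$ for nonempty cones by Theorem \ref{thm2.2a}(5), and $\bigl(\bigl(-\cald_1^+(\Omega)\bigr)_*\bigr)^*=-\cald_1^+(\Omega)$ because the latter is a norm-closed convex cone; but the needed intersection identity becomes the trivial set-theoretic distributivity $\cap_\omega\bigl(\Gamma(\omega)\cup B\bigr)=\bigl(\cap_\omega\Gamma(\omega)\bigr)\cup B=\Gamma(U)\cup B$ with $B$ fixed, after which \eqref{eq2.5a2} follows by Theorem \ref{thm2.2a}(6) exactly as in Theorem \ref{thm2.5}. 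This also renders your explicit cone $\mathcal N(\Omega)$ and the integration-by-parts computation unnecessary (and spares you the unproved claim that enough subharmonic Green-type potentials lie in $\cald_1(\Omega)$).

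For \eqref{eq2.5a1} there is a second structural point. The auxiliary cone $-\cald_1^{+0}(\Omega,\omega)_*$ genuinely depends on $\omega$, so even with unions one should not expect an exact analogue of Proposition \ref{prop2.4}; but only the inclusion ``$\subset$'' requires proof, and for that a one-sided chain of inclusions suffices: from $\cald_1^{+0}(\Omega,U)\subset\cald_1^{+0}(\Omega,\omega)$ one gets $\Gamma(\omega)\cup-\cald_1^{+0}(\Omega,\omega)_*\subset\Gamma(\omega)\cup-\cald_1^{+0}(\Omega,U)_*$, hence, intersecting over $\omega$, using Proposition \ref{prop2.4} and dualizing, $\FPSH^{-0}(\Omega,U)=\bigl(\Gamma(U)\cup-\cald_1^{+0}(\Omega,U)_*\bigr)^*\cap{\cal C}_{\cal F}(U)\subset\overline{\cup_\omega\PSH^{-0}(\Omega,\omega)}\cap{\cal C}_{\cal F}(U)$. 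Your sketch instead aims at the full two-sided intersection identity for an $\omega$-dependent augmented cone, which is both more than is needed and not available. In short: replace the sum by the union, and for the ``$-0$'' case prove only the inclusion of duals rather than an equality of cones.
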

\begin{proof} For both cases it suffices to prove the inclusion ``$\subset$''.  The proof of \eqref{eq2.5a2}
is a straightforward adaptation of the proof of Theorem \ref{thm2.5}.

\[\begin{split}\FPSH^-(\Omega,U)=\FPSH(\Omega,U)\cap-\cald^+(\Omega)=\left(\Gamma(U)^*\cap-\cald^+(\Omega)\right)\cap{\cal C}_\calf(U)\\=
\left(\Gamma(U)\cup-\cald^+(\Omega)_*\right)^*\cap{\cal C}_\calf(U)=
\left(\cap_\omega (\Gamma(\omega)\cup-\cald^+(\Omega)_*)\right)^*\cap{\cal C}_\calf(U)\\=
{\overline {\cup_\omega \left(\PSH(\Omega,\omega)\cap-\cald^+(\Omega)\right)}}\cap{\cal C}_{\cal F}(U)=
{\overline {\cup_\omega \PSH^-(\Omega,\omega)}}\cap{\cal C}_{\cal F}(U).
\end{split}\]

For the proof of \eqref{eq2.5a1} we proceed as follows. For every Euclidean open $\omega$ with $U\subset\omega\subset\Omega$ we have that $\cald_1^{+0}(\Omega, U)\subset \cald^{+0}_1(\Omega,\omega)$, so that $\cald_1^{+0}(\Omega, \omega)_*\subset \cald^{+0}_1(\Omega,U)_*$. Hence
\[\Gamma(\omega)\cup-\cald_1^{+0}(\Omega, \omega)_*\subset \Gamma(\omega)\cup -\cald^{+0}_1(\Omega,U)_*.
\]
We infer that
\[\cup_\omega (\Gamma(\omega)\cup\-\cald_1^{+0}(\Omega, \omega)_*)\subset\cap_\omega \Gamma(\omega)\cup -\cald^{+0}_1(\Omega,U)_*,
\]
where the union is taken over all Euclidean open $\omega$ with
$U\subset \omega\subset \Omega$. From this we have
\[
\left(\cap_\omega \Gamma(\omega)\cup -\cald^{+0}_1(\Omega,U)_*\right)^*\subset\left(\cup_\omega \Gamma(\omega)\cup-\cald_1^{+0}(\Omega, \omega)_*\right)^*,
\]
hence,
\[
\left(\Gamma(U)\cup -\cald^{+0}_1(\Omega,U)_*\right)^*\subset\left(\cup_\omega \Gamma(\omega)\cup-\cald_1^{+0}(\Omega, \omega)_*\right)^*.
\]
It follows that
\[\FPSH^{-0}(\Omega,U)=\Gamma(U)^*\cap-D_1^{+0}(\Omega,U)
\subset \overline {\cup_\omega \PSH^{-0}(\Omega,\omega)}\cap {\cal C}_{\cal F}(U).
\]
As in the proof of Theorem \ref{thm2.5}, in both cases the closure is at first in the weak and then, by Hahn-Banach, also in the strong topology.
\end{proof}

\smallskip
In the next two corollaries we assume that $U$ is compactly contained in $\Omega$.

\begin{cor}\label{cor2.1} Let $u\in \FPSH(\Omega,U)$ be bounded from above by $M$ on $U$. Then there exist
a sequence $(\omega_j)$ of open sets  $U\subset\omega_j\subset\Omega$
and functions
$f_j\in \FPSH(\Omega,\omega_j)$, $f_j\le M$ on $\omega_j$ such that $(f_j)$ converges to $u$
in the strong topology of ${\cal D}_1(\Omega)$.
\end{cor}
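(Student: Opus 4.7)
The plan is to reduce Corollary \ref{cor2.1} to Theorem \ref{thm2.5a} by absorbing $M$ into a smooth cutoff. Since $U$ is compactly contained in $\Omega$, I would choose a Euclidean open $W$ with $\overline U \subset W \subset\subset \Omega$ and $\eta \in C^\infty_c(\Omega)$ with $\eta \equiv 1$ on $W$ and $0 \le \eta \le 1$. I would then set $\bar u := \min(u - M\eta,\, 0)$; since $M\eta \in \cald_1(\Omega)$, this puts $\bar u$ in $\cald_1(\Omega)$ with $\bar u \le 0$ a.e.\ on $\Omega$ and $\bar u = u - M$ on $U$ (where $u \le M$ and $\eta = 1$). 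Because $u - M \in \FPSH(U)$, this places $\bar u$ in $\FPSH^-(\Omega, U)$.

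Theorem \ref{thm2.5a} (equation \eqref{eq2.5a2}) then yields Euclidean open $\omega_j' \supset U$ in $\Omega$ and $g_j \in \PSH^-(\Omega, \omega_j')$ with $g_j \to \bar u$ strongly in $\cald_1(\Omega)$. Setting $\omega_j := \omega_j' \cap W$ (a Euclidean open neighborhood of $U$) and $f_j := g_j + M\eta$, one has $f_j \in \cald_1(\Omega)$; on $\omega_j \subset W$ the cutoff $\eta$ is constantly $1$, so $f_j = g_j + M$ is plurisubharmonic (sum of a PSH function and a constant) and satisfies $f_j \le M$ (since $g_j \le 0$ a.e.). This gives $f_j \in \FPSH(\Omega, \omega_j)$ with $f_j \le M$ on $\omega_j$, as required.

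The hard part will be verifying $f_j \to u$ strongly in $\cald_1(\Omega)$, since a priori one only gets $f_j \to \bar u + M\eta = \min(u, M\eta)$, and this limit can differ from $u$ on the set $\{u > M\eta\}\subset\Omega\setminus U$. I would resolve this by a diagonal argument: shrinking $W$ toward $U$ using outer regularity of Lebesgue measure applied to the quasi-open (hence measurable) set $U$ to make $|W\setminus U|$ arbitrarily small, while simultaneously choosing $\eta$ to dominate $u/M$ on increasingly large Euclidean subsets of $\Omega\setminus W$. The absolute continuity of $\int|\nabla u|^2\,dV$ then drives the $L^2$-part of the Dirichlet error to zero, and continuity of truncation in $H^1$ (Stampacchia) controls the gradient part, making $\|(u - M\eta)^+\|_{\cald_1(\Omega)}$ arbitrarily small and yielding the required strong convergence upon extracting a diagonal subsequence.
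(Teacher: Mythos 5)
Your first two paragraphs are, up to notation, exactly the paper's argument: the paper takes $\chi\in C^\infty_0(\Omega)$ with $\chi\le 0$ and $\chi=-M$ on a Euclidean neighborhood $\omega$ of $U$ (your $-M\eta$), passes to $u+\chi$, applies Theorem \ref{thm2.5a} \eqref{eq2.5a2} to get $u_j\to u+\chi$ strongly, and sets $f_j=u_j-\chi$ on $\omega_j=\omega\cap\tilde\omega_j$. The crucial difference is that the paper does \emph{not} truncate: since $f_j-u=u_j-(u+\chi)$ and translation by the fixed element $\chi$ is an isometry of $\cald_1(\Omega)$, strong convergence $f_j\to u$ is immediate and there is no residual error term. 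Your truncation $\bar u=\min(u-M\eta,0)$ is what creates the discrepancy $u-\min(u,M\eta)=(u-M\eta)^+$ that your third paragraph must then kill. (To be fair, the truncation responds to a real point: membership in $\FPSH^-(\Omega,U)$ requires $u+\chi\le 0$ a.e.\ on all of $\Omega$, which the hypothesis $u\le M$ on $U$ does not give off $U$; the paper asserts $u+\chi\in\FPSH^-(\Omega,U)$ without comment.)

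The genuine gap is in the third paragraph: the proposed diagonal argument does not make $\|(u-M\eta)^+\|_{\cald_1(\Omega)}$ small. First, the two requirements you place on $\eta$ are incompatible: $0\le\eta\le 1$ and ``$\eta$ dominates $u/M$ on large subsets of $\Omega\setminus W$'' cannot both hold wherever $u>M$, and $u\in\cald_1(\Omega)$ need not be bounded above off $U$ at all. Second, the Dirichlet norm is $\|\nabla\cdot\|_2$; there is no ``$L^2$-part,'' so absolute continuity of $\int|\nabla u|^2\,dV$ only handles the piece of the error inside $W$, where $\nabla\eta=0$ and $\{u>M\eta\}\subset W\setminus U$ has small measure. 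Outside $W$ one has $\nabla(u-M\eta)^+=\mathbf 1_{\{u>M\eta\}}(\nabla u-M\nabla\eta)$ a.e., and the term $M^2\int_{\{u>M\eta\}}|\nabla\eta|^2\,dV$ is not controlled by shrinking measures: forcing $\eta$ to climb from $1$ on $W$ to a bound for $u/M$ across a thin shell around $W$ makes $|\nabla\eta|$ blow up at exactly the rate that defeats the smallness of the shell's volume, and near $\partial\Omega$, where compact support forces $\eta=0$, the set $\{u>M\eta\}$ contains all of $\{u>0\}$. The repair is not a diagonal argument but the paper's route: drop the $\min(\cdot,0)$, work with $u-M\eta$ itself so that $f_j=g_j+M\eta$ converges to $u$ on the nose; the one point that then still deserves a remark is the global inequality $u-M\eta\le 0$ off $U$, which the paper's proof leaves implicit.
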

\begin{proof}
Let $\chi \in C^\infty_0(\Omega)$ be $\le 0$ and $=-M$ on a Euclidean neighborhood $\omega$ of $U$.
Then $u+\chi\in \FPSH^-(\Omega,U)$. Theorem \ref{thm2.5a} \eqref{eq2.5a2} gives a sequence of functions $u_j\in\FPSH^-(\Omega,\tilde \omega_j)$ converging strongly to $u+\chi$. Set $\omega_j=\omega\cap\tilde\omega_j$. Then the functions $f_j=u_j-\chi$ are in $ \FPSH^{-}(\Omega,\omega_j)$, $f_j\le M$ on $\omega_j$, and $(f_j)$ converges to $u$ strongly.
\end{proof}

\begin{cor}\label{cor2.2} Let $u\in \FPSH(\Omega,U)$ be bounded from above by $M$ on $U$. Then there exist
a sequence $(\omega_j)$ of open sets  $U\subset\omega_j\subset\Omega$
and functions
$f_j\in \PSH(\omega_j)$, $f_j\le M$ such that $(f_j)$ converges to $u$ q-e. on $U$.
\end{cor}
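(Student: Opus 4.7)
The plan is to feed Corollary \ref{cor2.1} into the identification $\FPSH(\Omega,\omega)=\PSH(\Omega,\omega)$ for Euclidean open $\omega$ (noted just after equation \eqref{eq2.2}), and then to upgrade strong convergence in $\cald_1(\Omega)$ to pointwise quasi-everywhere convergence along a subsequence.

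Concretely, I would first apply Corollary \ref{cor2.1} to obtain a sequence of Euclidean open sets $\omega_j$ with $U\subset\omega_j\subset\Omega$ and functions $f_j\in\FPSH(\Omega,\omega_j)$, $f_j\le M$ on $\omega_j$, with $f_j\to u$ in the strong topology of $\cald_1(\Omega)$. Because $\omega_j$ is Euclidean open, each $f_j$ in fact lies in $\PSH(\Omega,\omega_j)$; that is, $f_j$ is genuinely plurisubharmonic on $\omega_j$. Hence no further work is required to produce plurisubharmonic representatives, and only the mode of convergence needs upgrading from strong $\cald_1$-convergence to q-e.\ convergence on $U$.

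For that upgrade, elements of $\cald_1(\Omega)$ are represented by quasi-continuous functions, cf.~\cite{DL}, and a classical fact in Dirichlet-space theory asserts that any strongly convergent sequence in $\cald_1(\Omega)$ admits a subsequence whose quasi-continuous representatives converge quasi-everywhere with respect to the associated (outer $(1,2)$-) capacity. Concretely, I would extract $f_{j_k}$ with $\|f_{j_k}-u\|\le 2^{-k}$, bound the Dirichlet capacity of $\{|f_{j_k}-u|>1/k\}$ by a Chebyshev-type inequality by a summable quantity, and apply Borel--Cantelli to obtain q-e.\ convergence of $f_{j_k}$ to $u$ on all of $\Omega$, hence on $U$. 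Relabeling $f_{j_k}$ and $\omega_{j_k}$ produces the claimed sequence, and $f_{j_k}\le M$ is preserved under passage to a subsequence. The main (and in fact only) substantive obstacle is invoking the correct quantitative version of this Dirichlet-capacity fact; everything else is a direct consequence of Corollary \ref{cor2.1} and the identification $\FPSH(\Omega,\omega)=\PSH(\Omega,\omega)$ on Euclidean opens.
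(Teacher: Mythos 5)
Your proposal is correct and follows essentially the same route as the paper: the paper likewise reduces to Theorem \ref{thm2.5a} (via the same modification of $u$ that proves Corollary \ref{cor2.1}) and then cites Deny--Lions \cite{DL} for precisely the fact you sketch via Chebyshev and Borel--Cantelli, namely that strong convergence in $\cald_1(\Omega)$ yields quasi-everywhere convergence of a subsequence of quasi-continuous representatives. Your use of the identification $\FPSH(\Omega,\omega_j)=\PSH(\Omega,\omega_j)$ for Euclidean open $\omega_j$ is the same point the paper relies on implicitly.
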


\begin{proof} Modifying $u$ as in the previous proof, the corollary follows immediately from
 Theorem \ref{thm2.5a} and \cite[Th\'eor\`eme 4.1, p.357]{DL}.
\end{proof}

\begin{lemma}\label{lem2.6} Let $(u_i)$ be a  family of ${\cal F}$-locally
uniformly bounded  ${\cal F}$-pluri\-subharmonic functions
on $U$. Then for every
$z\in U$ there exist an open ball $B=B(z,r)$ and
an ${\cal F}$-open ${\cal O}$ such that $z\in {\cal O}\subset B$ and
$u_i|_B\in \FPSH(B,{\cal O})$.
\end{lemma}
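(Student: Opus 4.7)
The plan is to reduce the lemma to the structural theorem from \cite{EW2} quoted in the introduction, that $\calf$-locally bounded $\calf$-plurisubharmonic functions are $\calf$-locally differences of bounded plurisubharmonic functions on a Euclidean open set, and then to upgrade that pointwise/per-function representation to a uniform one.

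First, I would use the hypothesis of $\calf$-local uniform boundedness to produce an $\calf$-open neighborhood $V \subset U$ of $z$ and a constant $M>0$ such that $|u_i|\le M$ on $V$ for every $i$. Next, for each $i$, the EW2 representation theorem provides a Euclidean ball $B_i \ni z$, bounded plurisubharmonic functions $\phi_i,\psi_i$ on $B_i$, and an $\calf$-open $W_i$ with $z\in W_i\subset V\cap B_i$ satisfying $u_i=\phi_i-\psi_i$ on $W_i$. The goal is then to replace $B_i$ by a common Euclidean ball $B=B(z,r)$ and to shrink the $W_i$ to a common $\calf$-neighborhood $\calo\subset B\cap V$ of $z$ contained in every $W_i$.

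The main obstacle is exactly this uniformity step. I would attack it by revisiting the EW2 construction and observing that the radius of $B_i$ and the envelope construction producing $\phi_i,\psi_i$ can be traced to depend only on the underlying $\calf$-neighborhood $V$ and the uniform bound $M$, not on the particular $u_i$. Concretely, once $V$ and $M$ are fixed one may take $\phi_i$ to be (the upper semicontinuous regularization of) an envelope of plurisubharmonic minorants of $u_i$ on a ball $B=B(z,r)$ chosen small enough to sit well inside $V$ relative to a fine neighborhood base at $z$, and $\psi_i=\phi_i-u_i$ extended plurisubharmonically by the same recipe that works for a single function. Uniformity of the bound $M$ is exactly what guarantees that the same $B$ and the same $\calf$-neighborhood $\calo$ serve for all indices $i$; this uniformity check is the only delicate point.

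Granted the common $B$ and $\calo$, the conclusion is routine. On $\calo$ we have $u_i=\phi_i-\psi_i$ with $\phi_i,\psi_i$ bounded plurisubharmonic on $B$, hence in $W^{1,2}_{\mathrm{loc}}(B)$; after possibly shrinking $B$ slightly, $\phi_i,\psi_i\in\cald_1(B)$, so $\phi_i-\psi_i\in\cald_1(B)$. Declaring this difference to be the representative $u_i|_B$, it agrees with $u_i$ on $\calo$, where $u_i$ is $\calf$-plurisubharmonic. By the definition of $\FPSH(B,\calo)$ as $\{f\in\cald_1(B):f|_{\calo}\in\FPSH(\calo)\}$, we obtain $u_i|_B\in\FPSH(B,\calo)$ for every $i$, as required.
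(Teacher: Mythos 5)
There is a genuine gap at the final step. You claim that since $\phi_i,\psi_i$ are bounded plurisubharmonic on $B$ they lie in $W^{1,2}_{\mathrm{loc}}(B)$, and that ``after possibly shrinking $B$ slightly'' they belong to $\cald_1(B)$. This does not follow. The space $\cald_1(B)$ is the closure of $C^\infty_c(B)$ in the Dirichlet norm; on a bounded ball this forces vanishing boundary values in the Sobolev/BLD sense (by the Poincar\'e inequality, a Dirichlet-norm limit of compactly supported smooth functions is also their $L^2$-limit). Already the constant function $1$ is bounded, plurisubharmonic and in $W^{1,2}_{\mathrm{loc}}$, yet it belongs to no $\cald_1(B')$ for any smaller ball $B'$. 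So having finite Dirichlet integral on a relatively compact subset is not enough; you must produce representatives that agree with $\phi_i,\psi_i$ near $z$ \emph{and} have the right boundary behaviour. The paper does exactly this: after normalizing so that $-v_i$ and $-\Phi$ are positive on $B$, it replaces them by their swept-out (r\'eduite) onto the concentric ball $B'=B(z,r/2)$. These swept-out functions coincide with the originals on $B'$ (hence on ${\cal O}$ after shrinking), and they are bounded potentials on $B$ whose Riesz measures have compact support, hence have finite energy and lie in $\cald_1(B)$ by Deny--Lions. This balayage step is the missing idea in your argument.

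Two smaller remarks. First, your ``uniformity'' step (a common ball, a common ${\cal F}$-neighborhood, and representing functions controlled only by $V$ and $M$) is indeed what is needed, and the paper obtains it by invoking the \emph{proof} of \cite[Theorem 2.4]{EFW} directly, which produces one ball $B$, one ${\cal O}$, uniformly bounded $v_i$, and a \emph{single} $\Phi$ with $u_i=v_i-\Phi$ on ${\cal O}$ for all $i$; your plan of applying the single-function statement per index and then retracing the construction is workable but more roundabout, and as written it remains a sketch rather than a proof. Second, the common subtrahend $\Phi$ is harmless here but is worth noting, since it is what makes the normalization and sweeping uniform over the family.
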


\begin{proof} Following the proof of \cite[Theorem 2.4]{EFW}, there exist an open ball  $B=B(z,r)$,
an ${\cal F}$-open ${\cal O}$, a family $(v_i)$ of
uniformly bounded plurisubharmonic functions
on $B$, and a $\Phi\in\psh(B)$ such that for all $i\in I$,
$u_i=v_i-\Phi$ on ${\cal O}$. By adding the same constant to each of the functions $-v_i$ and $-\Phi$ one may suppose that these are  positive on $B$. After replacing $B$ by
 $B'=B(z,\frac{r}{2})$ and $-v_i$ as well as
$-\Phi$ by their  swept out on $B'$, and shrinking $\cal O$ if necessary, one can assume that $-v_i$ and $-\Phi$ are potentials on $B$. Then one has
$u_i=v_i-\Phi\in {\cal D}_1(B)$.
\end{proof}

\begin{theorem}\label{thm2.7} Let $u\in\FPSH(U)$ be finite.
Then for all $z\in U$ there exist
an ${\cal F}$-open neighborhood ${\cal O}$, a constant $M$, a sequence of open sets $(\omega_j)$ such that ${\cal O}\subset \omega_j\subset\Omega$, and a sequence of functions $(f_j)$, $f_j\in \PSH(\omega_j)$ such that $f\le M$ on $\cal O$, $f_j\le M$ on  $\omega_j$, and $(f_j)$ converges q.e. to $u$.
\end{theorem}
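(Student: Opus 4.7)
The plan is to reduce this to Corollary \ref{cor2.2} after a suitable plurifine localization. First, since $u\in\FPSH(U)$ is finite, the results recalled in the introduction give that $u$ is $\calf$-continuous on $U$. Therefore at a given $z\in U$ we can find an $\calf$-open neighborhood $V\subset U$ of $z$ and a constant $M$ with $u\le M$ on $V$ (so that also $u$ is bounded below on some $\calf$-neighborhood of $z$, making the singleton family $\{u\}$ $\calf$-locally uniformly bounded near $z$).

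Next I apply Lemma \ref{lem2.6} to the single function $u$ (viewed as a one-element $\calf$-locally uniformly bounded family) at the point $z\in V$. This furnishes a Euclidean ball $B=B(z,r)$ and an $\calf$-open set $\cal O$ with $z\in \cal O\subset B$, such that $u|_B\in \FPSH(B,\cal O)$. By intersecting with $V$ and with a smaller concentric ball $B(z,r/2)$ if necessary, I may shrink $\cal O$ so that $u\le M$ on $\cal O$ and so that $\cal O$ is compactly contained in $B$; the restriction $u|_B$ still belongs to $\FPSH(B,\cal O)$.

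Now the hypotheses of Corollary \ref{cor2.2} are met with ambient bounded Euclidean domain $B$ in place of $\Omega$ and plurifinely open set $\cal O$ in place of $U$: indeed $u\in\FPSH(B,\cal O)$ is bounded above by $M$ on $\cal O$, and $\cal O\Subset B$. The corollary then produces a sequence of Euclidean open sets $\omega_j$ with $\cal O\subset\omega_j\subset B\subset\Omega$ and a sequence $f_j\in\PSH(\omega_j)$ with $f_j\le M$ on $\omega_j$ such that $f_j\to u$ quasi-everywhere on $\cal O$. This is exactly the asserted conclusion (reading the ``$f\le M$'' in the statement as the typo ``$u\le M$'').

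The only genuine obstacle is to verify that we may simultaneously achieve (i) $u|_B\in\FPSH(B,\cal O)$ from Lemma \ref{lem2.6}, (ii) the uniform bound $u\le M$ on $\cal O$, and (iii) the compact containment $\cal O\Subset B$ needed to invoke Corollary \ref{cor2.2}; but all three are standard plurifine localizations that cost nothing beyond replacing $\cal O$ by its intersection with a smaller $\calf$-open neighborhood of $z$ on which the relevant property holds, using that finite intersections of $\calf$-open sets are $\calf$-open.
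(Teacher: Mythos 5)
Your proposal is correct and follows essentially the same route as the paper: use $\calf$-continuity of the finite function $u$ to get an upper bound $M$ on an $\calf$-neighborhood of $z$, apply Lemma \ref{lem2.6} to place $u$ in $\FPSH(B,{\cal O})$ for a suitable ball $B$ and $\calf$-open ${\cal O}\ni z$, and then invoke Corollary \ref{cor2.2}. Your extra care about arranging ${\cal O}\Subset B$ and transporting the bound $u\le M$ to ${\cal O}$ is a point the paper glosses over, but it is handled exactly as you describe.
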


\begin{proof} Let $z\in U$. Because $u$ is $\calf$-continuous, there exists $M>0$ such that $u\le M$ on an $\calf$-neighborhood of $z$. The preceding lemma applied to $u$ provides us then with
an ${\cal F}$-open neighborhood ${\cal O}$ such that
$u\in \PSH(\Omega,{\cal O})$ and $u\le M$ on $\cal O$. The result now follows immediately from
 Corollary \ref{cor2.2}.
\end{proof}

\begin{cor}\label{cor2.7} Let $u$ be a finite function in $\FPSH(U)$. Then each point $z$ of $U$ admits a plurifine neighborhood ${\cal O}$ and
a sequence $(f_j)$ of plurisubharmonic  functions defined and uniformly bounded on (shrinking) neighborhoods of ${\cal O}$
such that $u=\inf_k(\sup_{j\ge k}f_j)^*$ in ${\cal O}$
and $(f_j)$ converges to  $f$ q-e.
\end{cor}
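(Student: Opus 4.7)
The plan is to start from Theorem \ref{thm2.7}, use $\calf$-continuity of $u$ at $z$ to arrange that $u$ is bounded both above and below on the $\calf$-neighborhood, truncate the approximating sequence from below so that it becomes uniformly bounded, and then recognize $u$ as the decreasing limit of the $\calf$-upper-semicontinuous regularizations of the tail suprema.

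First apply Theorem \ref{thm2.7} to $u$ at $z$ to obtain an $\calf$-open neighborhood $\cal O_0$ of $z$, a constant $M$, Euclidean open sets $\omega_j$ with $\cal O_0 \subset \omega_j \subset \Omega$, and functions $\tilde f_j \in \PSH(\omega_j)$ with $\tilde f_j \le M$ on $\omega_j$, $u \le M$ on $\cal O_0$, and $\tilde f_j \to u$ q.e.\ on $\cal O_0$. Since $u$ is finite and $\calf$-continuous at $z$, any $m < u(z)$ makes $\{u > m\}$ into an $\calf$-open neighborhood of $z$, so $\cal O := \cal O_0 \cap \{u > m\}$ is $\calf$-open with $m \le u \le M$. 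Set $f_j := \max(\tilde f_j, m) \in \PSH(\omega_j)$. Then $m \le f_j \le M$ on $\omega_j$, and because $u \ge m$ on $\cal O$, the truncation preserves q.e.\ convergence: $f_j \to u$ q.e.\ on $\cal O$. (If shrinking neighborhoods are wanted one simply replaces $\omega_j$ by $\omega_1 \cap \dots \cap \omega_j$.) This produces the sequence $(f_j)$ of uniformly bounded plurisubharmonic functions defined on neighborhoods of $\cal O$ as required.

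On $\cal O$ define $v_k := \bigl(\sup_{j \ge k} f_j\bigr)^{*}$, the $\calf$-upper-semicontinuous regularization. Since the $f_j$ are uniformly bounded and $\calf$-plurisubharmonic, the plurifine upper envelope result recalled in the introduction yields that $v_k$ is $\calf$-plurisubharmonic on $\cal O$ and equals $\sup_{j \ge k} f_j$ outside a pluripolar set $E_k$. The sequence $(v_k)$ is decreasing, hence $v := \inf_k v_k$ is $\calf$-plurisubharmonic on $\cal O$, in particular $\calf$-continuous. Letting $F \subset \cal O$ be the pluripolar set off which $f_j \to u$ and setting $E := F \cup \bigcup_k E_k$ (still pluripolar), we get for every $w \in \cal O \setminus E$
\[
v_k(w) \;=\; \sup_{j \ge k} f_j(w) \;\searrow\; \limsup_{j \to \infty} f_j(w) \;=\; u(w),
\]
so $v = u$ on $\cal O \setminus E$. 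The step I expect to be the main obstacle is upgrading this q.e.\ equality to a pointwise equality on all of $\cal O$. It is handled by the fact that two $\calf$-continuous functions agreeing outside a pluripolar set agree everywhere: pluripolar sets are $\calf$-nowhere dense, since any nonempty $\calf$-open set has positive Lebesgue measure while $\{\phi = -\infty\}$ (with $\phi \in \PSH$, $\phi \not\equiv -\infty$) is a Lebesgue null set. This yields $u = v = \inf_k\bigl(\sup_{j \ge k} f_j\bigr)^{*}$ throughout $\cal O$, which is the desired conclusion.
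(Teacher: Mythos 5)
Your proof is correct and follows essentially the same route as the paper: invoke Theorem \ref{thm2.7}, identify $u$ q.e.\ with $\inf_k(\sup_{j\ge k}f_j)^*$ using the upper-envelope regularization result, and upgrade to equality everywhere on ${\cal O}$ via $\calf$-continuity together with the fact that a nonempty $\calf$-open set cannot be pluripolar. Your additional truncation $f_j=\max(\tilde f_j,m)$ on ${\cal O}_0\cap\{u>m\}$ is a welcome refinement, since the paper's proof only records the upper bound $M$ and leaves the uniform lower bound claimed in the statement implicit.
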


\begin{proof}
Let $z\in U$. By Theorem \ref{thm2.7} there exists
a plurifine open ${\cal O}$ containing $z$, a constant $M>0$, a sequence of open sets $(\omega_j)$ with ${\cal O} \subset\omega_j\subset \Omega$, and
functions
$f_j\in \PSH(\omega_j)$ with $f_j\le M$ on $\omega_j$, such that $(f_j)$ converges
q-e to $u$ in $U$. Therefore $u=\inf_k(\sup_{j\ge k}f_j)$ q-e. Because  $\inf_k(\sup_{j\ge k}f_j)=
\inf_k[(\sup_{j\ge k}f_j)^*]$ q-e. in ${\cal O}$ we conclude that $u=\inf_k[(\sup_{j\ge k}f_j)^*]$
q-e. As the functions $f_j$ are bounded from above by $M$ on $\cal O$, the function $\inf_k[(\sup_{j\ge k}f_j)^*]$ belongs to
$\FPSH(\cal O)$ in view of \cite[theorem 3.9]{EFW}. It follows that $u=\inf_k[(\sup_{j\ge k}f_j)^*]$ everywhere on ${\cal O}$.
\end{proof}

\begin{definition}\label{def3.3} Cf.~\cite[Definition 2.2]{EFW}. A function $u:U\longrightarrow [-\infty,+\infty[$
is called \emph{$C$-strongly finely plurisubharmonic on $U$}, $u\in \FCPSH(U)$,  if for every point
$z$ in $U$ one can find a compact ${\cal F}$-neighborhood  $K$ of $z$ and continuous plurisubharmonic functions $f_j$ defined on open neighborhoods of $K$ such that $u=\lim f_j$ uniformly on $K$.
A function $u:U\longrightarrow [-\infty,+\infty[$ is called {\em strongly $\calf$-plurisubharmonic} if $u$ is the pointwise limit of a decreasing net of $\FCPSH$ functions.
\end{definition}

It is clear that every $u\in\FCPSH(U)$
belongs to $\FPSH(U)$.

\begin{theorem}\label{thm2.8} Let $u$ be an ${\cal F}$-plurisubharmonic function
in $U$. Then there exists a  pluripolar ${\cal F}$-closed set $E$ such that
$u$ is $C$-strongly ${\cal F}$-plurisubharmonic in $U\setminus E$.
\end{theorem}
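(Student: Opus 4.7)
The plan is to combine the local approximation from Corollary~\ref{cor2.7} with Hartogs' lemma and convolution, carving out a pluripolar exceptional set where the construction fails. First, observe that $\{u=-\infty\}$ is $\calf$-closed (since $u$ is $\calf$-upper semicontinuous) and pluripolar, so it can be absorbed into $E$ and we may assume $u$ finite on $U$. For each $z\in U$, Corollary~\ref{cor2.7} furnishes an $\calf$-open neighborhood $\cal O_z$ and uniformly bounded plurisubharmonic functions $f_j^z$ on (shrinking) Euclidean neighborhoods of $\cal O_z$, such that $g_k^z:=(\sup_{j\ge k}f_j^z)^*$ decreases pointwise to $u$ on $\cal O_z$. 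The quasi-Lindel\"of property of the plurifine topology then extracts a countable subcover $\{\cal O_i\}$ of $U$ modulo a pluripolar set.

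For each $w\in\cal O_i$ I would build a Euclidean-compact $\calf$-neighborhood $K\subset\cal O_i$ of $w$, using that every point of an $\calf$-open set lies in a basic set of the form $\omega\cap\{v_1>0\}\cap\cdots\cap\{v_p>0\}$ with $\omega$ a Euclidean ball and $v_j\in\PSH(\omega)$: taking $K:=\overline{B(w,r)}\cap\{v_1\ge v_1(w)/2\}\cap\cdots\cap\{v_p\ge v_p(w)/2\}$ with $\overline{B(w,r)}\subset\omega$ yields a Euclidean closed set (PSH functions are Euclidean USC), hence compact, which contains the $\calf$-open set $B(w,r)\cap\bigcap_j\{v_j>v_j(w)/2\}$ about $w$. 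Applying Hartogs' lemma to the decreasing sequence $g_k^i$ on a common Euclidean open neighborhood of $K$, converging to the PSH limit $u_*^i:=\lim_k g_k^i$ (which equals $u$ on $\cal O_i$), gives $g_k^i\to u$ uniformly on $K$. Each $g_k^i$ is itself the decreasing limit, on a slightly smaller Euclidean open, of smooth plurisubharmonic mollifications $g_{k,\ell}^i$, uniformly on Euclidean compacts. A diagonal extraction then produces continuous plurisubharmonic functions $h_m$ on Euclidean neighborhoods of $K$ with $h_m\to u$ uniformly on $K$, showing $u$ is $C$-strongly $\calf$-plurisubharmonic at $w$.

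The main obstacle is that the Euclidean domains of the $f_j^z$ supplied by Corollary~\ref{cor2.7} may shrink, so the common Euclidean domain of the $g_k^i$ need not contain a Euclidean open neighborhood of $K$, which is what Hartogs' lemma demands. This is where the pluripolar exceptional set enters: the points $w\in\cal O_i$ for which no admissible $K$ fits inside the common Euclidean domain of the $g_k^i$ are collected into a pluripolar set $P_i$. Taking $E$ to be the $\calf$-closure of $\{u=-\infty\}$, the residual pluripolar set from the quasi-Lindel\"of step, and all the $P_i$, one obtains a pluripolar, $\calf$-closed set (pluripolarity is preserved under countable unions and under $\calf$-closure) on whose complement $u$ is $C$-strongly $\calf$-plurisubharmonic, completing the proof.
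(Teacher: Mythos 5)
Your overall architecture (Corollary \ref{cor2.7} plus quasi-Lindel\"of plus a Euclidean-compact $\calf$-neighborhood built from a basic plurifine open set) matches the paper's, but the two analytic steps that are supposed to produce \emph{uniform} convergence on $K$ do not work as stated, and they are precisely where the pluripolar exceptional set actually has to come from. First, Hartogs' lemma (equivalently, the upper semicontinuous version of Dini's theorem) gives $g_k^i\le f+\eps$ on $K$ eventually only when the comparison function $f$ is \emph{continuous}; here the decreasing limit $u_*^i=\lim_k g_k^i$ is merely plurisubharmonic, hence only upper semicontinuous, and it equals $u$ on $K$, which is only $\calf$-continuous. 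So ``$g_k^i\to u$ uniformly on $K$'' is unjustified. Second, the mollifications $g^i_{k,\ell}$ of $g_k^i$ decrease to $g_k^i$ only \emph{pointwise}; they converge uniformly on a compact set only if $g_k^i$ is continuous there (a uniform limit of continuous functions is continuous), which again fails for a general plurisubharmonic $g_k^i$. The paper resolves exactly this difficulty with Lemma \ref{lem2.9}: by the quasi-Brelot property of \cite[Theorem 3.3]{EW2}, outside a pluripolar $\calf$-closed set every point admits a Euclidean-compact $\calf$-neighborhood on which the restrictions of $u$, of $(\sup_{j\ge k}f_j)^*$, and of the finite maxima $\max_{k\le j\le i}f_j$ are all Euclidean continuous; only then do the two Dini arguments (one for $\sup_{j\ge k}f_j\downarrow u$, one for $\max_{k\le j\le i}f_j\uparrow\sup_{j\ge k}f_j$) give uniform approximation by plurisubharmonic functions defined on genuine open neighborhoods. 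This continuity defect, together with the negligible set where $\sup_{j\ge k}f_j$ differs from its regularization, is the real source of $E$.

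By contrast, the obstacle you single out --- the shrinking of the Euclidean domains $\omega_j$ --- is handled in the paper not by discarding a pluripolar set but by replacing the infinite suprema $g_k^i$ with the finite maxima $\max_{k\le j\le i}f_j$, which are plurisubharmonic on honest open neighborhoods of $K$ (a finite intersection of the $\omega_j$). Your proposal to collect ``the points $w$ for which no admissible $K$ fits inside the common Euclidean domain'' into a pluripolar set $P_i$ is an unsupported assertion: nothing in your construction shows that this set is pluripolar, and no such set need be removed once one works with finite maxima. (A smaller remark: absorbing $\{u=-\infty\}$ into $E$ at the outset is a sensible reduction, but you should justify why this set is pluripolar and $\calf$-closed for a weakly $\calf$-plurisubharmonic $u\not\equiv-\infty$.)
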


For the proof of Theorem \ref{thm2.8} we need the following lemma.

\begin{lemma} \label{lem2.9} Let $(u_j)$ be a sequence of
 ${\cal F}$-plurisubharmonic functions that is ${\cal F}$-locally uniformly
bounded from above on an ${\cal F}$-open $V\subset \Omega$.
Then there exists a pluripolar set $E\subset V$ such that every
 point $z$ of $V\setminus E$ admits a Euclidean compact ${\cal F}$-neighborhood
$K_z$, such that for every $j$ the restriction
of $u_j$ to $K_z$ is continuous.
\end{lemma}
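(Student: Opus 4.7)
The plan is to combine Lemma \ref{lem2.6}, Corollary \ref{cor2.2}, and the quasi-Lindel\"of property of $\calf$ in order to produce plurisubharmonic approximants of all $u_j$ on a common $\calf$-neighborhood, and then upgrade the quasi-everywhere convergence to uniform convergence on a Euclidean-compact $\calf$-subneighborhood.

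First I would apply Lemma \ref{lem2.6} to the family $(u_j)$: every $z\in V$ has a Euclidean ball $B(z)$ and an $\calf$-open neighborhood $\mathcal{O}(z)\ni z$, $\mathcal{O}(z)\subset B(z)$, on which each $u_j$ lies in $\FPSH(B(z),\mathcal{O}(z))\subset\cald_1(B(z))$. The quasi-Lindel\"of property yields a countable family $(\mathcal{O}_m)$ whose union covers $V$ up to a pluripolar set $P_0$; after shrinking, each $\mathcal{O}_m$ is relatively compact in its ball $B_m$. For each pair $(m,j)$, Corollary \ref{cor2.2} then supplies a sequence $(f^{(m,j)}_k)_k$ of plurisubharmonic functions on shrinking Euclidean neighborhoods of $\mathcal{O}_m$, uniformly bounded above and converging quasi-everywhere on $\mathcal{O}_m$ to $u_j$. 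By standard convolution smoothing the $f^{(m,j)}_k$ may be taken continuous, and Theorem \ref{thm2.5a} ensures the convergence is in fact $\cald_1(B_m)$-strong.

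Passing to a subsequence with $\sum_k\|f^{(m,j)}_{k+1}-f^{(m,j)}_k\|_{\cald_1}<\infty$ and using the standard Dirichlet-norm capacity estimate of \cite{DL}, for every $\eta>0$ I obtain an open set $G^{(m,j)}_\eta\subset B_m$ of capacity less than $\eta$ outside of which $f^{(m,j)}_k\to u_j$ uniformly. Setting $E_1=\bigcap_n\bigcup_{m,j}G^{(m,j)}_{2^{-n-m-j}}$ gives a set of vanishing capacity, and $E=P_0\cup\widetilde{E_1}\cup\bigcup_{m,j,k}\{f^{(m,j)}_k=-\infty\}$ is pluripolar since pluripolar sets form a $\sigma$-ideal and the $\calf$-closure of a pluripolar set is again pluripolar. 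For $z\in V\setminus E$ one has $z\in\mathcal{O}_m$ for some $m$, and since $z\notin\widetilde{E_1}$ there is $n$ for which $\mathcal{O}_m\setminus\bigcup_j G^{(m,j)}_{2^{-n-m-j}}$ is an $\calf$-neighborhood of $z$. Inside I select a Euclidean-compact $\calf$-neighborhood $K_z$ by the standard plurifine construction (e.g., a small closed Euclidean ball intersected with a sub-level set of a suitable PSH function). On $K_z$ each $(f^{(m,j)}_k)$ converges uniformly to $u_j$, so $u_j|_{K_z}$ is continuous as a uniform limit of continuous functions.

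The main obstacle is ensuring that the capacity-zero set $E_1$, and in particular its $\calf$-closure, is pluripolar rather than merely polar in $\RR^{2n}$. I expect this to follow from the plurisubharmonic (not merely subharmonic) nature of the approximants provided by Corollary \ref{cor2.2}, whose quasi-everywhere exceptional set is already pluripolar; the extraction can be arranged so that $E_1$ is absorbed into that pluripolar set. A secondary subtle point is the simultaneous avoidance, by a single $K_z$, of countably many thin sets $G^{(m,j)}_{2^{-n-m-j}}$ indexed by $j$, but this is handled by the $\sigma$-ideal property of pluripolar sets together with the local base structure of $\calf$.
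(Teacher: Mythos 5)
There is a genuine gap, and it is exactly the one you flag at the end: your argument, as it stands, produces the wrong kind of exceptional set and the wrong kind of neighborhood. The Deny--Lions machinery behind Corollary \ref{cor2.2} and your Egorov-type extraction is governed by the Newtonian capacity of $\RR^{2n}$: the sets $G^{(m,j)}_\eta$ have small \emph{Newtonian} capacity, so $E_1$ is merely polar, and polar does not imply pluripolar (for $n\ge 3$ already $\RR^n\subset\CC^n$ is polar but not pluripolar); nothing in Corollary \ref{cor2.2} asserts that its q.e.\ exceptional set is pluripolar, so the hoped-for ``absorption'' has no source. A second, independent failure is the step where you claim that $\mathcal{O}_m\setminus\bigcup_jG^{(m,j)}_{2^{-n-m-j}}$ is an $\calf$-neighborhood of $z$: an open set of small capacity not containing $z$ can perfectly well be non-thin at $z$ (think of a punctured ball around $z$), so removing it need not leave even a classical fine neighborhood, and the plurifine topology is coarser still, so an $\calf$-neighborhood is strictly harder to obtain. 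Producing a Euclidean-compact \emph{plurifine} neighborhood on which a psh function is continuous, off a \emph{pluripolar} set, is precisely the content of the quasi-Brelot property \cite[Theorem 3.3]{EW2}; it is a substantive theorem about plurisubharmonic functions and cannot be recovered from Dirichlet-norm capacity estimates.

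The paper's proof uses that theorem, and uses it only once, by bundling the whole sequence into a single auxiliary function: write $u_j=v_j-\varphi$ on $\mathcal{O}_a$ as in Lemma \ref{lem2.6}, normalize $w_j=(v_j-C)/(C-v_j(a))\le 0$, and apply the quasi-Brelot property to the single plurisubharmonic function built from $\varphi$ and $\sum_j2^{-j}w_j$. This yields a pluripolar $E_a$ and, for each $t\notin E_a$, a Euclidean-compact $\calf$-neighborhood $K_t$ on which that function is continuous; since all the summands are upper semicontinuous and of one sign, each summand (hence each $w_j$, $\varphi$, and $u_j$) is then continuous on $K_t$. Quasi-Lindel\"of assembles the local data, and the $\calf$-closure of the resulting pluripolar union is still pluripolar. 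If you want to salvage your approach, you would have to replace the Newtonian-capacity Egorov step by an argument in the Monge--Amp\`ere/relative capacity scale and still face the problem of manufacturing compact $\calf$-neighborhoods; in practice that is the quasi-Brelot property again, so you should invoke it directly.
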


\begin{proof} Let $a\in V$. Following the proof of \cite[Theorem 2.4]{EFW}, there exist an open ball  $B=B(a,r)$ containing
an ${\cal F}$-open neighborhood ${\cal O_a}$ of $a$, a sequence $(v_j)$ of
uniformly bounded plurisubharmonic functions
on $B$, and a bounded $\varphi\in\psh(B)$, such that for all $j$,
$u_j=v_j-\varphi$ on ${\cal O_a}$.

 Let $C$ be an upper bound for the
$v_j$ on $B$. Put $w_j=(v_j-C)/(C-v_j(a))\le 0$.
We apply the quasi-Brelot property of plurisubharmonic functions, \cite[Theorem 3.3]{EW2},  to the plurisubharmonic function $\varphi-\sum_j 2^{-j}w_j$. It states that there exists
a  pluripolar set $E_a\subset V$
such that every point $t$ of $B\setminus E_a$ admits an ${\cal F}$-neighborhood
$K_t$ that is compact in the Euclidean topology and such that the restriction
of $\varphi-\sum_j 2^{-j}w_j$ to $K_t$ is continuous. Because the functions $w_j$ and $\varphi$ are upper semicontinuous and negative on
$K_t$,
we infer that the restriction of $\varphi$ to $K_t$ is continuous, and
for every $j$ the restriction
of $w_j$, and hence of $u_j$,  to $K_t$ is continuous too. The
 quasi-Lindel\"of property of the  plurifine topology provides us with
a pluripolar set $F$ and a sequence of points
$(a_k)$ in $V$ such that $V=\cup_k {\cal O}_{a_k}\cup F$. Now let
$E$ be  the  ${\cal F}$-closure of the pluripolar set
 $\cup_kE_{a_k}\cup F$. Then $E$ is pluripolar and every point $z$ of $V\setminus E$
admits an ${\cal F}$-neighborhood which is Euclidean compact and meets the conditions of the lemma.
\end{proof}

\begin{proof}[Proof of Theorem \ref{thm2.8}] Let $z\in U$. In view of Theorem \ref{thm2.7} and its
Corollary \ref{cor2.7}, we can find an ${\cal F}$-open ${\cal O}_z$ containing $z$
and a bounded from above sequence $(f_j)$ of plurisubharmonic functions defined on open neighborhoods of
${\cal O}_z$ such that $u  =\inf_k [(\sup_{j\ge k} f_j)^*]$ on ${\cal O}_z$.
Hence there exists an $\calf$-closed pluripolar subset $P$ of ${\CC}^n$ such that
$\inf_k(\sup_{j\ge k}f_j)=\inf_k[(\sup_{j\ge k}f_j)^*]$ on ${\cal O}_z\setminus P$.
By Lemma \ref{lem2.9},  one can also find a
pluripolar ${\cal F}$-closed set $Q$ such that every point
$t\in ({\cal O}_z\setminus P)\setminus Q$ admits a compact ${\cal F}$-neighborhood
$K_t$ on which the restriction of the functions $u$, $(\sup_{j\ge k}f_j)^*$, and
$\max_{k\le j\le i}f_j$, $1\le k\le i$, are Euclidean continuous. Let
$t\in {\cal O}_z\setminus (P\cup Q)$ and let $L_t$ be a compact ${\cal F}$-neighborhood
 of $t$ such that  $L_t\subset K_t\cap ({\cal O}_z\setminus (P\cup Q))$.
The sequences  $(\sup_{j\ge k}f_j)_k$ and
$(\max_{k\le j\le i}f_j)_i$ are monotonic, hence by Dini's Theorem, we have that  $\lim_{k\to +\infty}\sup_{j\ge k}f_j=u$, and
for all $k$,
$\lim_{i\to +\infty}\max_{k\le j\le i}f_j=\sup_{j\ge k}f_j$ uniformly
on $L_t$. From this we infer that the restrictions of functions
$f_{k,i}=\max_{k\le j\le i}f_j$, $k\le i$, which are plurisubharmonic in suitable neighborhoods of
$L_t$, approximate  $u$ uniformly on $L_t$.
\end{proof}

\section{\label{sec1} The Monge-Amp\`ere operator for
${\cal F}$-plurisubharmonic functions}

Let $U\subset\CC^n$ be a  plurifine domain and let $u$ be the restriction to $U$ of a function
$v\in {\PSH}(\Omega)\cap L_{loc}^\infty$, where $\Omega$ is an open neighborhood of $U$ in $\CC^n$. Then $u$ is ${\cal F}$-plurisubharmonic in
 $U$ and it is natural to try and define on $U$ the
 Monge-Amp\`ere of $u$ as the
 measure $(dd^cv)^n$ restricted to $U$.
For this definition to make  sense, the
 measure $(dd^cu)^n$ thus defined, should not depend on the
choice of $v$. This is guaranteed by the following result of Bedford and Taylor, \cite{BT}:

\begin{theorem}[{\cite[Corollary 4.3]{BT}}]\label{thm1.1} Let $u$ and $v$
be locally bounded plurisubharmonic functions on a domain $\Omega\subset\CC^n$ such that
$u=v$ on a plurifinely open set $V\subset\Omega$. Then $(dd^cu)^n|_V=(dd^cv)^n|_V$.
\end{theorem}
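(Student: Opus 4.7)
The plan is to prove this classical Bedford--Taylor result by combining the multilinearity of the Monge--Amp\`ere operator on differences of bounded plurisubharmonic functions, Bedford--Taylor quasi-continuity, and the fact that Monge--Amp\`ere measures of bounded PSH functions do not charge pluripolar sets.

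First, by restricting to a ball $B\Subset\Omega$ I reduce to the case where $u$ and $v$ are bounded on $\Omega$. Set $M=\max(u,v)$, a bounded PSH function on $\Omega$ satisfying $M=u=v$ on $V$. By the symmetry of the statement it then suffices to prove $(dd^c M)^n|_V = (dd^c u)^n|_V$. Using the multilinear expansion
\[
(dd^c M)^n-(dd^c u)^n = \sum_{k=0}^{n-1} dd^c(M-u)\wedge (dd^c M)^k\wedge (dd^c u)^{n-1-k},
\]
which makes sense because $M-u$ is a difference of bounded PSH functions (the Cegrell--Wiklund formula quoted in the introduction), the task reduces to showing that each signed measure on the right-hand side puts zero mass on Borel subsets of $V$.

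Next, observe that $\chi:=M-u=\max(v-u,0)\ge 0$ is bounded, $\calf$-continuous (as $u$ and $v$ are), and vanishes identically on the plurifinely open set $V$. I would then argue plurifine-locally: since $\calf$ has the quasi-Lindel\"of property, it suffices to handle an $\calf$-neighborhood of an arbitrary $z_0\in V$, which by the definition of $\calf$ can be chosen of the form $W\cap\bigcap_{i=1}^m\{\phi_i>0\}$ with $W$ Euclidean open and $\phi_i$ bounded PSH. On such a neighborhood $\chi$ is controlled by the $\phi_i$ away from $V$, allowing truncation arguments against smooth test forms.

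The main obstacle is this vanishing assertion: since $\chi$ is not plurisubharmonic, $dd^c\chi\wedge T$ is merely a signed measure, and its behavior on $V$ must be pinned down. This is where Bedford--Taylor quasi-continuity enters crucially: for every $\eps>0$ there is an open set $G_\eps\subset\Omega$ with $\capa(G_\eps,\Omega)<\eps$ such that $u$, $v$, and hence $\chi$ are Euclidean-continuous on $\Omega\setminus G_\eps$. On $V\setminus G_\eps$ the function $\chi$ vanishes Euclidean-continuously, so it can be approximated uniformly by smooth compactly supported test functions vanishing on a Euclidean neighborhood of $V\setminus G_\eps$, against which the above multilinear identity can be tested directly. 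The Chern--Levine--Nirenberg estimates control the mass of each current $(dd^c M)^k\wedge(dd^c u)^{n-1-k}$ on $G_\eps$, and since $(dd^c u)^n$, $(dd^c v)^n$, and $(dd^c M)^n$ do not charge pluripolar sets, the contributions from $G_\eps$ disappear in the limit $\eps\downarrow 0$, yielding $(dd^c M)^n|_V=(dd^c u)^n|_V$ and, by symmetry, the theorem.
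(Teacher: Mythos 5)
You should note first that the paper does not prove this statement at all: it is imported verbatim from \cite[Corollary 4.3]{BT}, and the method the paper (following Bedford--Taylor) uses for its own generalizations, Corollary \ref{cor1.4} and Theorem \ref{thm1.6}, is the max trick: on $V$ one has $u=\max(u,v-\tfrac1k)$, the plurifinely open set $\{u>v-\tfrac1k\}$ contains $V$, Proposition \ref{prop1.3} (proved by approximating from above by smooth plurisubharmonic functions so that the relevant sets become Euclidean open) gives $(dd^cu)^n|_V=(dd^c\max(u,v-\tfrac1k))^n|_V$, and the monotone convergence lemma, Lemma \ref{lem1.2}, lets $k\to\infty$; symmetry then finishes. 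Your reduction to $M=\max(u,v)$ and the telescoping identity are correct, but the whole proof then rests on the claim that each signed measure $dd^c(M-u)\wedge(dd^cM)^k\wedge(dd^cu)^{n-1-k}=(dd^cM)^{k+1}\wedge(dd^cu)^{n-1-k}-(dd^cM)^{k}\wedge(dd^cu)^{n-k}$ gives zero mass to Borel subsets of $V$. That claim is not a softer auxiliary fact: it is precisely the mixed-product version of the theorem (the content of Corollary \ref{cor1.4}), so nothing has been reduced.

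The mechanism you propose for this vanishing does not close. Uniform approximation of $\chi=M-u$ on $\Omega\setminus G_\eps$ by smooth functions vanishing on a Euclidean neighborhood of $V\setminus G_\eps$ (which already requires truncating at a level $\delta>0$, since $\chi$ may be positive Euclidean-arbitrarily close to $V$) yields at best weak-$*$ convergence $dd^c\chi_j\wedge T\to dd^c\chi\wedge T$ after moving $dd^c$ onto the test form; and weak-$*$ convergence of signed measures gives no control whatsoever on their restrictions to the set $V\setminus G_\eps$, which is neither open nor closed. The capacity estimates for the mass on $G_\eps$ are fine, but they do not repair this. The only available device for passing restrictions to plurifinely open sets through a limit is the one used in Lemma \ref{lem1.2} and \cite[Theorem 3.2]{BT}: write the plurifinely open set as $B\cap\{\psi>0\}$, test against the quasi-continuous function $\tilde\psi\ge 0$ vanishing off it, and invoke the convergence theorem for \emph{monotone} sequences of plurisubharmonic functions against quasi-continuous test functions. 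Your approximants of the non-plurisubharmonic function $\chi$ are not of this type, and if you replace them by monotone plurisubharmonic approximations of $u$ and $\max(u,v)$ you are led straight back to the max trick of Proposition \ref{prop1.3}. So the proposal has a genuine gap at its central step.
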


Because of the quasi-Lindel\"of property of the $\calf$-topology and the fact that the Monge-Amp\`ere mass of a bounded plurisubharmonic function does not charge pluripolar sets, Theorem \ref{thm1.1} will enable us to define more generally $(dd^cu)^n$ for functions $u$ that are $\calf$-locally the restriction of functions $v$ as above. See Section \ref{sec3}.

In all that follows, $\Omega$ will be a domain in $\CC^n$ ($n\ge 2$) and $U$ wil be an $\calf$-domain in $\Omega$.

We denote by $\QB(\CC^n)$ the measurable space on $\CC^n$ generated by the Borel sets and the
 pluripolar subsets of ${\CC}^n$ and by $\QB(U)$ the trace of
$\QB({\CC}^n)$ on $U$.

Let $u$ be a finite ${\cal F}$-plurisubharmonic function on a $\calf$-domain $U$. Then by \cite[Theorem 3.1]{EW2}, $u$ is ${\cal F}$-locally the difference
$w=v_1-v_2$ of bounded plurisubharmonic functions $v_1,v_2$ defined on an open set in $\CC^n$.
As a consequence ${\cal F}$-plurisubharmonic functions are $\calf$-continuous and therefore ${\cal F}$-plurisubharmonic functions are $\calf$-locally bounded on $U$ if and only if they are finite on $U$.

Following Cegrell and Wiklund \cite{CW} one  defines the  (signed) Monge-Amp\`ere mass $(dd^cw)^n$
associated to  $w$ as
\begin{equation}\label{CeWi} (dd^cw)^n=\sum_{p=0}^nC_n^p(-1)^p(dd^cv_1)^p\wedge(dd^cv_2)^{n-p},
\end{equation} where $C_n^p=\binom{n}{p}$. Therefore we would like to define $(dd^c u)^n$ ${\cal F}$-locally by
\[(dd^cu)^n=(dd^c w)^n.\]
To do so we have to generalize the Bedford and Taylor result,  Theorem \ref{thm1.1}.
We need some auxiliary results.

\begin{lemma}\label{lem1.2} Let $(u_1^k),\ldots,(u_n^k)$ and  $(v_1^k),\ldots,(v_n^k)$ be
monotonically decreasing (or increasing) sequences of locally bounded plurisubharmonic functions on an open set $\Omega\subset\CC^n$ that converge to functions $u_1,\ldots,u_n$ and
$v_1,\ldots,v_n$ that are locally bounded on $\Omega$. Let
${\cal O}\subset \Omega$ be an ${\cal F}$-open set. Suppose that for all $k$
\[dd^cu_1^k\wedge...\wedge dd^cu_n^k|_{\cal O}=
dd^cv_1^k\wedge...\wedge dd^cv_n^k|_{\cal O}.
\]
Then
\[
dd^cu_1\wedge...\wedge dd^c u_n|_{\cal O}=
dd^cv_1\wedge...\wedge dd^c v_n|_{\cal O}.
\]
\end{lemma}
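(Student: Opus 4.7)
The plan is to combine Bedford--Taylor's monotone convergence theorem for mixed Monge--Amp\`ere currents with a plurifine quasi-continuity argument. First, I would invoke the standard Bedford--Taylor continuity: since the sequences $(u_j^k)$ and $(v_j^k)$ are monotone and uniformly locally bounded, the mixed currents
\[
\mu_k := dd^c u_1^k \wedge \cdots \wedge dd^c u_n^k, \qquad \nu_k := dd^c v_1^k \wedge \cdots \wedge dd^c v_n^k
\]
converge weakly, as Radon measures on $\Omega$, to $\mu := dd^c u_1\wedge\cdots\wedge dd^c u_n$ and $\nu := dd^c v_1\wedge\cdots\wedge dd^c v_n$, respectively. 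Moreover none of $\mu,\nu,\mu_k,\nu_k$ charges a pluripolar set.

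By hypothesis $\mu_k|_{\mathcal O}=\nu_k|_{\mathcal O}$ for every $k$, and the task is to pass to the limit and conclude $\mu|_{\mathcal O}=\nu|_{\mathcal O}$. Weak convergence on $\Omega$ alone is not enough, because $\mathcal O$, being only plurifinely open, may have a very large Euclidean boundary on which weak limits can concentrate. The way around this is to strengthen the convergence so that it holds against indicator functions of plurifinely open subsets of $\mathcal O$. Concretely, in the Bedford--Taylor framework the convergence $\int \phi\,d\mu_k \to \int \phi\,d\mu$ holds for any bounded test function $\phi$ that is quasi-continuous with respect to the relative Monge--Amp\`ere capacity; since plurifinely continuous functions are quasi-continuous in this sense, this applies to $\phi=\mathbf{1}_W$ for any plurifinely open $W\subset \mathcal O$, yielding
\[
\mu(W)=\lim_k\mu_k(W)=\lim_k\nu_k(W)=\nu(W).
\]
Combined with the quasi-Lindel\"of property of the plurifine topology and the fact that $\mu$ and $\nu$ do not charge pluripolar sets, a monotone class argument then upgrades this ``equality on plurifine opens'' to the desired equality $\mu|_{\mathcal O}=\nu|_{\mathcal O}$ of Radon measures on $\mathcal O$.

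The principal obstacle will be the quasi-continuity step: upgrading weak-$*$ convergence (valid against $C^\infty_c$) to convergence against indicators of plurifinely open sets. This rests on two standard but nontrivial ingredients --- Monge--Amp\`ere of bounded plurisubharmonic functions do not charge pluripolar sets, and plurifinely continuous functions are quasi-continuous with respect to the relevant outer capacity. Once these are in hand, the rest of the argument is essentially formal. An entirely alternative route (which I would try in parallel if the quasi-continuity step proved delicate in the mixed/monotone setting) would be to reduce to Theorem~\ref{thm1.1} by a max/min truncation, replacing each $(u_j^k,v_j^k)$ by the coincident pair $(\max(u_j^k,v_j^k), \max(u_j^k,v_j^k))$ on an $\calf$-open enlargement of $\mathcal O$, but passing to the limit in this reduction seems to require the same quasi-continuity input.
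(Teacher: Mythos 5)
Your strategy coincides with the paper's: the paper dismisses this lemma as ``a straightforward adaptation of \cite[Lemma 4.1]{BT}'' and omits the proof, and that adaptation is exactly the combination you describe (Bedford--Taylor monotone convergence for the mixed currents, quasi-Lindel\"of, negligibility of pluripolar sets, and testing against quasi-continuous functions concentrated on ${\cal O}$). It is the same device the authors do spell out in their Lemma \ref{lem3.2}.

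One step of your sketch is stated too strongly, however, and as written it would fail. The indicator $\mathbf{1}_W$ of a plurifinely open set $W$ is quasi-\emph{lower semicontinuous} (because $W$ is quasi-open), but it is not in general quasi-continuous: that would require the complement of $W$ to be quasi-open as well, which already fails for $W=\{\psi>0\}$. So you cannot feed $\mathbf{1}_W$ directly into the convergence theorem; and weak convergence tested against a quasi-l.s.c.\ function only yields the one-sided bound $\mu(W)\le\liminf_k\mu_k(W)$, which, holding in the same direction for both $\mu$ and $\nu$, does not combine to the desired equality. The standard repair --- the one in \cite[Lemma 4.1]{BT} and in Lemma \ref{lem3.2} of the paper --- is to use quasi-Lindel\"of and the non-charging of pluripolar sets to reduce to ${\cal O}=B\cap\{\psi>0\}$, put $\tilde\psi=\chi\max(\psi,0)$ with $\chi$ a cutoff positive on $B$, and test against $f\tilde\psi$ for continuous $f\ge 0$: these functions are bounded and quasi-continuous, vanish off ${\cal O}$, so $\int f\tilde\psi\,d\mu_k=\int f\tilde\psi\,d\nu_k$ for all $k$, and the monotone convergence theorem lets this pass to the limit, giving $\tilde\psi\,\mu=\tilde\psi\,\nu$ and hence $\mu|_{\cal O}=\nu|_{\cal O}$ since $\tilde\psi>0$ exactly on ${\cal O}$. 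With that substitution your argument is complete and is the intended proof.
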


\begin{proof} The proof consists of a straightforward adaptation of \cite[Lemma 4.1]{BT} and will be omitted.
\end{proof}

\begin{prop}\label{prop1.3} Let $u_1,...,u_n, v_1,...,v_n$ be locally bounded plurisubharmonic
functions on  $\Omega$, let $0\le m \le n$, and let
${\cal O}\subset\{u_1>v_1\}\cap\cdots\cap\{u_m>v_m\}$ be $\calf$-open. Then
\[\begin{split}
dd^c\max(u_1,v_1)\wedge...\wedge dd^c\max(u_m,v_m)\wedge dd^cu_{m+1}...\wedge dd^cu_n|_{\cal O}\\=
dd^cu_1\wedge...\wedge dd^cu_n|_{\cal O}.
\end{split}\]
\end{prop}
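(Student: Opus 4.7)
The plan is to apply Theorem~\ref{thm1.1} to a family of nonnegative linear combinations parameterized by $s \in \RR_{\ge 0}^n$, and then extract the mixed Monge-Amp\`ere identity by polarization. Since $\mathcal{O} \subset \{u_i > v_i\}$ for $1 \le i \le m$, we have $\max(u_i, v_i) = u_i$ pointwise on $\mathcal{O}$. Setting $w_i := \max(u_i, v_i)$ for $i \le m$ and $w_i := u_i$ for $i > m$, the conclusion becomes
\[
dd^c w_1 \wedge \cdots \wedge dd^c w_n\big|_{\mathcal{O}} = dd^c u_1 \wedge \cdots \wedge dd^c u_n\big|_{\mathcal{O}},
\]
while the hypothesis reduces to $w_i = u_i$ on $\mathcal{O}$ for every $i = 1, \ldots, n$.

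For $s = (s_1, \ldots, s_n) \in \RR_{\ge 0}^n$, the functions $F_s := \sum_{i=1}^n s_i w_i$ and $G_s := \sum_{i=1}^n s_i u_i$ are locally bounded plurisubharmonic on $\Omega$ (nonnegative linear combinations of such) and coincide pointwise on $\mathcal{O}$. Theorem~\ref{thm1.1} then gives $(dd^c F_s)^n|_{\mathcal{O}} = (dd^c G_s)^n|_{\mathcal{O}}$ for every such $s$. Expanding both sides using the multilinearity of the Bedford-Taylor Monge-Amp\`ere product for locally bounded plurisubharmonic functions yields, on $\mathcal{O}$,
\[
\sum_{|\alpha| = n} \binom{n}{\alpha_1, \ldots, \alpha_n}\, s^{\alpha}\, (dd^c w_1)^{\alpha_1} \wedge \cdots \wedge (dd^c w_n)^{\alpha_n}
\]
and the same expression with $u_i$ in place of $w_i$. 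For any relatively compact Borel set $A \subset \mathcal{O}$, these Bedford-Taylor measures evaluated on $A$ are finite, so the identity becomes a polynomial identity in $s \in \RR_{\ge 0}^n$ between real numbers. Comparing the coefficient of the monomial $s_1 s_2 \cdots s_n$ on each side (which equals $n!$ times the fully mixed product evaluated on $A$), we obtain
\[
(dd^c w_1 \wedge \cdots \wedge dd^c w_n)(A) = (dd^c u_1 \wedge \cdots \wedge dd^c u_n)(A),
\]
and letting $A$ exhaust $\mathcal{O}$ produces the desired equality of measures.

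The subtle step is the polarization, namely the extraction of the coefficient of $s_1 \cdots s_n$ from a polynomial identity whose coefficients are positive measures rather than scalars. This is made rigorous by testing on relatively compact Borel subsets $A \subset \mathcal{O}$, where the Bedford-Taylor measures are finite and one is left with a polynomial identity between real numbers holding on the open cone $(0,\infty)^n$, from which standard coefficient-matching applies. Apart from this point the argument is a direct application of Theorem~\ref{thm1.1} together with the multilinearity of the Monge-Amp\`ere wedge product for locally bounded plurisubharmonic functions.
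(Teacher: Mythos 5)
Your argument is correct, but it takes a genuinely different route from the paper. The paper does not use Theorem~\ref{thm1.1} here at all: it replaces $u_1,\dots,u_m$ by decreasing sequences of \emph{smooth} plurisubharmonic approximants $u_i^k$, observes that $\mathcal{O}_k=\cap_i\{u_i^k>v_i\}$ is then Euclidean open and contains $\mathcal{O}$ (so the identity is trivial there for each $k$), and passes to the limit via the monotone-convergence Lemma~\ref{lem1.2}; Corollary~\ref{cor1.4} is then \emph{deduced} from Proposition~\ref{prop1.3}. You go the other way: after reducing to $w_i=u_i$ on $\mathcal{O}$ you in effect prove Corollary~\ref{cor1.4} directly, by applying the cited Bedford--Taylor result (Theorem~\ref{thm1.1}) to the nonnegative combinations $\sum_i s_iw_i$ and $\sum_i s_iu_i$ and polarizing the resulting polynomial identity in $s$. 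This is shorter, avoids both smooth approximation and Lemma~\ref{lem1.2}, and delivers all mixed products $(dd^cw_1)^{\alpha_1}\wedge\cdots\wedge(dd^cw_n)^{\alpha_n}$ at once; the cost is that it leans on the full strength of the imported Corollary~4.3 of \cite{BT}, whereas the paper's proof is self-contained modulo the elementary Lemma~\ref{lem1.2} and stays closer to Bedford--Taylor's original scheme. Two small points worth tightening: the final ``let $A$ exhaust $\mathcal{O}$'' step should note that $\mathcal{O}$ is only quasi-Borel, so equality of the restrictions to $\mathcal{O}$ uses the quasi-Lindel\"of property together with the fact that these measures do not charge pluripolar sets (exactly as elsewhere in the paper); alternatively, you can sidestep the testing-on-sets issue entirely by evaluating the measure-valued polynomial identity at finitely many values of $s$ and inverting the resulting (generalized Vandermonde) linear system directly at the level of measures.
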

\begin{proof} We proceed as in \cite{BT}. In case ${\cal O}$ is a Euclidean open set the statement is obvious. Let $(u_1^k),\ldots,(u_m^k)$ be decreasing sequences of smooth plurisubharmonic functions that converge, respectively,
to $u_1,\ldots,u_m$.
We put ${\cal O}_k=\cap_{i=1}^m\{u_i^k>v_i\}$. Then for every $k$ the set ${\cal O}_k$
is a Euclidean open set on which $\max(u_i^k,v)=u_i^k$, and ${\cal O}\subset {\cal O}_k$.
Therefore
\[\begin{split}
dd^c\max(u_1^k,v_1)\wedge...\wedge dd^c\max(u^k_m,v_m)\wedge dd^cu_{m+1}...\wedge dd^cu_n|_{{\cal O}}\\ =
dd^cu_1^k\wedge...\wedge dd^cu^k_m\wedge dd^cu_{m+1}...\wedge dd^cu_n|_{\cal O}.
\end{split}
\]
From this and Lemma \ref{lem1.2} we conclude
\[\begin{split}dd^c\max(u_1,v_1)\wedge...\wedge dd^c\max(u_m,v_m)\wedge dd^cu_{m+1}\wedge... dd^c\max(u_n,v_n)|_{\cal O}\\=
dd^cu_1\wedge...\wedge dd^cu_n|_{\cal O}.
\end{split}\]
\end{proof}

\begin{cor}\label{cor1.4} Let $u_1,\ldots,u_n, v_1,\ldots,v_n$ be locally bounded plurisubharmonic functions on $\Omega$ and let ${\cal O}\subset \Omega$ be $\calf$-open.
If $u_1=v_1$, ..., $u_n=v_n$ on ${\cal O}$, then
\[
dd^cu_1\wedge...\wedge dd^cu_n|_{\cal O}=dd^cv_1\wedge...\wedge dd^cv_n|_{\cal O}.
\]
\end{cor}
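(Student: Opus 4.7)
The plan is to reduce the equality $u_i = v_i$ on $\mathcal{O}$ to the strict inequality required by Proposition \ref{prop1.3}, and then pass to the limit using Lemma \ref{lem1.2}. Proposition \ref{prop1.3} cannot be applied directly because it needs $\mathcal{O} \subset \{u_i > v_i\}$, which is not implied by $u_i = v_i$ on $\mathcal{O}$, but a small shift of $v_i$ fixes this.

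Concretely, for each $k \ge 1$ and $1 \le i \le n$, set $w_i^k := \max(u_i,\, v_i - 1/k)$, which is locally bounded plurisubharmonic on $\Omega$. Because $u_i = v_i > v_i - 1/k$ on $\mathcal{O}$, the $\calf$-open set $\mathcal{O}$ is contained in $\bigcap_{i=1}^n \{u_i > v_i - 1/k\}$. Proposition \ref{prop1.3} with $m=n$, applied to $u_i$ and to $v_i - 1/k$, then gives, for every $k$,
\[
  dd^c w_1^k \wedge \cdots \wedge dd^c w_n^k \big|_{\mathcal{O}}
  = dd^c u_1 \wedge \cdots \wedge dd^c u_n \big|_{\mathcal{O}}.
\]
As $k \to \infty$, each sequence $(w_i^k)_k$ is monotonically increasing (since $v_i - 1/k$ is) and converges to the locally bounded plurisubharmonic function $\max(u_i, v_i)$. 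I now invoke Lemma \ref{lem1.2} with $(w_i^k)$ on one side and the constant sequence $(u_i)$ on the other: the hypothesis of the lemma is precisely the equality just displayed, and its conclusion yields
\[
  dd^c \max(u_1, v_1) \wedge \cdots \wedge dd^c \max(u_n, v_n) \big|_{\mathcal{O}}
  = dd^c u_1 \wedge \cdots \wedge dd^c u_n \big|_{\mathcal{O}}.
\]

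Running the same argument with the roles of $u_i$ and $v_i$ interchanged --- i.e.\ using the sequence $\max(v_i,\, u_i - 1/k)$ and the inclusion $\mathcal{O} \subset \bigcap_i \{v_i > u_i - 1/k\}$ --- produces the symmetric identity
\[
  dd^c \max(u_1, v_1) \wedge \cdots \wedge dd^c \max(u_n, v_n) \big|_{\mathcal{O}}
  = dd^c v_1 \wedge \cdots \wedge dd^c v_n \big|_{\mathcal{O}}.
\]
Comparing the two equalities gives the corollary. I anticipate no real obstacle here; the one routine point to verify, beyond the monotonicity of the approximating sequences, is that the limits $\max(u_i, v_i)$ are locally bounded plurisubharmonic, which is immediate.
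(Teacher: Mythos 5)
Your proposal is correct and follows essentially the same route as the paper: apply Proposition \ref{prop1.3} with the shifted functions $v_i - 1/k$ (so that the strict inequality $u_i > v_i - 1/k$ holds on $\mathcal{O}$), let $k \to \infty$ via Lemma \ref{lem1.2} to reach $dd^c\max(u_1,v_1)\wedge\cdots\wedge dd^c\max(u_n,v_n)|_{\mathcal O}$, and conclude by symmetry in $u$ and $v$. The only cosmetic difference is that you spell out the use of Lemma \ref{lem1.2} with a constant sequence on one side, which the paper leaves implicit.
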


\begin{proof} Let $k >0$. As
$u_i=\max(u_i,v_i-\frac{1}{k})$, $i=1,...,n$ on $\cal O$, we have by Proposition \ref{prop1.3}
\[
dd^cu_1\wedge...\wedge dd^cu_n|_{\cal O}=
dd^c\max(u_1,v_1-\frac{1}{k})\wedge...\wedge dd^c\max(u_n,v_n-\frac{1}{k})|_{\cal O}.
\]
Letting $k\to \infty$, we obtain from Lemma \ref{lem1.2}
\[dd^cu_1\wedge...\wedge dd^cu_n|_{\cal O}=
dd^c\max(u_1,v_1)\wedge...\wedge dd^c\max(u_n,v_n)|_{\cal O}.
\]
Similarly we find
\[
dd^cv_1\wedge...\wedge dd^cv_n|_{\cal O}=
dd^c\max(u_1,v_1)\wedge...\wedge dd^c\max(u_n,v_n)|_{\cal O},
\] which completes the proof.
\end{proof}

\begin{prop}\label{prop1.5} Let $f_1,g_1, f_2,g_2\in \PSH(\Omega)\cap L_{loc}^\infty$
and let ${\cal O}=\{f_1-g_1>f_2-g_2\}$. Then
\[(dd^c\max(f_1-g_1,f_2-g_2))^n|_{\cal O}=(dd^c(f_1-g_1))^n|_{\cal O}.
\]
\end{prop}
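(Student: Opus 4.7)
The plan is to reduce the statement to an application of Proposition \ref{prop1.3} by rewriting the max of differences as a difference of ordinary plurisubharmonic functions.

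First I would observe the identity
\[\max(f_1-g_1,f_2-g_2)=\max(f_1+g_2,f_2+g_1)-(g_1+g_2),\]
and set $h_1=\max(f_1+g_2,f_2+g_1)$ and $h_2=g_1+g_2$. Both $h_1$ and $h_2$ are locally bounded plurisubharmonic functions on $\Omega$. Moreover, the $\calf$-open set ${\cal O}$ coincides with $\{f_1+g_2>f_2+g_1\}$, so on ${\cal O}$ one has $h_1=f_1+g_2$. Thus the left-hand side of the proposition is, by the Cegrell--Wiklund definition \eqref{CeWi},
\[(dd^c\max(f_1-g_1,f_2-g_2))^n=(dd^c(h_1-h_2))^n=\sum_{p=0}^n\binom{n}{p}(-1)^p(dd^ch_1)^{n-p}\wedge(dd^ch_2)^p.\]

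Next I would apply Proposition \ref{prop1.3} to each wedge product above, taking $m=n-p$ copies of $u_i=f_1+g_2$, $v_i=f_2+g_1$ (so that $\max(u_i,v_i)=h_1$), and the remaining $p$ slots occupied by $dd^ch_2$. Since ${\cal O}\subset\{f_1+g_2>f_2+g_1\}$, Proposition \ref{prop1.3} gives
\[(dd^ch_1)^{n-p}\wedge(dd^ch_2)^p\big|_{\cal O}=(dd^c(f_1+g_2))^{n-p}\wedge(dd^c(g_1+g_2))^p\big|_{\cal O}\]
for every $0\le p\le n$. Summing with signs,
\[(dd^c\max(f_1-g_1,f_2-g_2))^n\big|_{\cal O}=\sum_{p=0}^n\binom{n}{p}(-1)^p(dd^c(f_1+g_2))^{n-p}\wedge(dd^c(g_1+g_2))^p\big|_{\cal O}.\]

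Finally, the right-hand side of the last display is by definition $(dd^c((f_1+g_2)-(g_1+g_2)))^n=(dd^c(f_1-g_1))^n$, using that the Cegrell--Wiklund signed Monge-Amp\`ere defined by \eqref{CeWi} depends only on the difference of the two plurisubharmonic functions. This is a straightforward combinatorial identity: expanding $(dd^c(f_1+g_2))^{n-p}$ and $(dd^c(g_1+g_2))^p$ into wedge monomials in $dd^cf_1$, $dd^cg_1$, $dd^cg_2$, the coefficient of $(dd^cf_1)^a\wedge(dd^cg_1)^b\wedge(dd^cg_2)^c$ with $a+b+c=n$ collapses via $\sum_s(-1)^s\binom{c}{s}=0$ whenever $c>0$, leaving exactly the terms $(-1)^b\binom{n}{b}(dd^cf_1)^{n-b}\wedge(dd^cg_1)^b$ of $(dd^c(f_1-g_1))^n$.

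The main obstacle will be the bookkeeping needed to justify the application of Proposition \ref{prop1.3} with factors of two different $dd^c h_i$'s in a mixed wedge product; but since Proposition \ref{prop1.3} is formulated precisely to allow replacement of $\max$-factors by single functions on ${\cal O}$ while keeping the other factors arbitrary locally bounded psh functions, the reduction goes through. The combinatorial step collapsing the double sum into $(dd^c(f_1-g_1))^n$ is the well-known invariance of the signed Monge-Amp\`ere under replacement of a decomposition $u-v$ by $(u+w)-(v+w)$ with $w\in\PSH\cap L^\infty_{loc}$, and may also be invoked without explicit computation.
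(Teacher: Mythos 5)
Your proposal is correct and follows essentially the same route as the paper's proof: rewrite $\max(f_1-g_1,f_2-g_2)$ as $\max(f_1+g_2,f_2+g_1)-(g_1+g_2)$, expand via \eqref{CeWi}, replace each $\max$-factor by $f_1+g_2$ on ${\cal O}$ using Proposition \ref{prop1.3} with $m=n-p$, and collapse the resulting sum to $(dd^c(f_1-g_1))^n$. The only difference is that you spell out the final combinatorial identity (invariance of the Cegrell--Wiklund expression under the shift $u-v=(u+w)-(v+w)$), which the paper asserts without comment; your verification of it is correct.
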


\begin{proof}
We will use equation \eqref{CeWi} and apply Proposition \ref{prop1.3} with various $m$ and for every $j$, $u_j=f_1+g_2$ and $v_j=f_2+g_1$. Then

\begin{align*}
(dd^c\max&(f_1-g_1,f_2-g_2))^n|_{\cal O}\\&=(dd^c\max(f_1+g_2,f_2+g_1)-(g_1+g_2))^n|_{\cal O}
\\
&= \sum_{p=0}^n(-1)^pC_n^p(dd^c\max(f_1+g_2,f_2+g_1))^{n-p}\wedge (dd^c(g_1+g_2))^p|_{\cal O}\\
&
= \sum_{p=0}^n(-1)^pC_n^p(dd^c(f_1+g_2))^{n-p}\wedge (dd^c(g_1+g_2))^p|_{\cal O}\\
&=(dd^c(f_1-g_1))^n|_{\cal O}.
\end{align*}
\end{proof}
Now we can prove a generalization of Theorem \ref{thm1.1}.

\begin{theorem}\label{thm1.6} Suppose that $u_1,u_2,v_1,v_2$ are plurisubharmonic
functions on a domain
$\Omega\subset\CC^n$. If $u_1-u_2=v_1-v_2$ on an $\calf$-open ${\cal O}\subset \Omega$, then
$(dd^c(u_1-u_2))^n|_{\cal O}=(dd^c(v_1-v_2))^n|_{\cal O}$.
\end{theorem}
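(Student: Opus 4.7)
The strategy is to produce a single pair $(F,G)$ of locally bounded plurisubharmonic functions on $\Omega$ such that on $\mathcal{O}$ the difference $F-G$ is represented by $F = u_1+v_2$, $G = u_2+v_2$ and \emph{simultaneously} by $F = v_1+u_2$, $G = u_2+v_2$. Corollary~\ref{cor1.4} can then be applied termwise inside the Cegrell--Wiklund expansion~\eqref{CeWi} to equate the two signed Monge--Amp\`ere masses on $\mathcal{O}$.

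Concretely, rewrite the hypothesis $u_1-u_2 = v_1-v_2$ on $\mathcal{O}$ as $u_1+v_2 = v_1+u_2$ on $\mathcal{O}$ and set
\[
F := \max(u_1+v_2,\, v_1+u_2), \qquad G := u_2+v_2,
\]
both of which belong to $\PSH(\Omega)\cap L^\infty_{\mathrm{loc}}(\Omega)$. On $\mathcal{O}$ we have simultaneously $F = u_1+v_2$ and $F = v_1+u_2$, while $G = u_2+v_2$ throughout $\Omega$. Expanding $(dd^c(F-G))^n$ by \eqref{CeWi} and applying Corollary~\ref{cor1.4} to each $n$-fold wedge $(dd^cF)^{n-p}\wedge(dd^cG)^p$ yields two parallel identities on $\mathcal{O}$, one with $F$ replaced by $u_1+v_2$ and one with $F$ replaced by $v_1+u_2$. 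By \eqref{CeWi} itself, the two resulting sums are the Cegrell--Wiklund signed masses attached to the decompositions $(u_1+v_2)-(u_2+v_2)$ and $(v_1+u_2)-(u_2+v_2)$, which represent $u_1-u_2$ and $v_1-v_2$ respectively.

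The final step is the global identity
\[
(dd^c((w_1+h)-(w_2+h)))^n = (dd^c(w_1-w_2))^n \quad \text{on } \Omega
\]
for $w_1,w_2,h\in\PSH(\Omega)\cap L^\infty_{\mathrm{loc}}(\Omega)$, applied with $h = v_2$ and with $h = u_2$. This ``shift invariance'' of the Cegrell--Wiklund mass under adding a common PSH summand is a purely algebraic consequence of the multilinearity of \eqref{CeWi}: after expanding $(dd^c(w_i+h))^k$ by the binomial theorem, the terms of positive degree in $dd^ch$ collect into alternating sums $\sum_q(-1)^q\binom{c}{q}=0$ and so vanish. Chaining the three equalities delivers $(dd^c(u_1-u_2))^n|_{\mathcal{O}} = (dd^c(v_1-v_2))^n|_{\mathcal{O}}$. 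The key creative step is the choice of the common decomposition $(F,G)$; the shift-invariance, though requiring attention, is standard combinatorial bookkeeping and needs no additional analytic input beyond \eqref{CeWi} and Corollary~\ref{cor1.4}.
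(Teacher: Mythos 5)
Your argument is correct, and it reaches the conclusion by a cleaner route than the paper while pivoting on the same function: your $F=\max(u_1+v_2,\,v_1+u_2)$ is exactly the function $\max(u_1+v_2,\,u_2+v_1)$ around which the paper's own proof is organized. The difference lies in how the comparison with $F$ is established. The paper perturbs to obtain a strict inequality $u_1-u_2>v_1-v_2-\tfrac1k$ on $\mathcal O$, applies Proposition \ref{prop1.5} for each $k$, and then passes to the limit $k\to\infty$ via Lemma \ref{lem1.2}; you instead observe that on the set of \emph{equality} the maximum coincides with either representative, so Corollary \ref{cor1.4} applies termwise in the expansion \eqref{CeWi}, with no perturbation and no limiting argument at this stage. (The perturbation-and-limit machinery is still present, but only inside the proof of Corollary \ref{cor1.4}, which you are entitled to use as a black box.) Your ``shift invariance'' $(dd^c((w_1+h)-(w_2+h)))^n=(dd^c(w_1-w_2))^n$ is indeed a purely formal consequence of the symmetry and multilinearity of the Bedford--Taylor mixed operator, via the polynomial identity $\sum_p\binom{n}{p}(-1)^p(x+z)^{n-p}(y+z)^p=(x-y)^n$; the paper uses the same fact implicitly in the last line of the proof of Proposition \ref{prop1.5}, so this is not an extra assumption. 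Two small points of hygiene: as in the paper, the hypothesis must be read as $u_i,v_i\in\PSH(\Omega)\cap L^\infty_{loc}$ for \eqref{CeWi} to make sense, and you should state explicitly that Corollary \ref{cor1.4} is applied to the $n$-tuples $(F,\dots,F,G,\dots,G)$ and $(u_1+v_2,\dots,u_1+v_2,G,\dots,G)$. Neither affects the validity of the proof.
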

\begin{proof} Let $k$ be a positive integer. As $u_1-u_2>v_1-v_2-\frac{1}{k}$
in ${\cal O}$, we have in view of Proposition \ref{prop1.5}
\begin{align*}(dd^c&(u_1-u_2))^n|_{\cal O}=
(dd^c\max(u_1-u_2,v_1-v_2-\frac{1}{k}))^n|_{\cal O}\\
&=(dd^c[\max(u_1+v_2,u_2+v_1-\frac{1}{k})-(u_2+v_2)])^n|_{\cal O}\\
&=\sum_{p=0}^nC_n^p(-1)^p(dd^c\max(u_1+v_2,u_2+v_1-\frac{1}{k}))^{n-p}\wedge(dd^c(u_2+v_2))^p|_{\cal O}
\end{align*}
We let $k\to \infty$  and obtain in view of Lemma \ref{lem1.2},
\begin{align*}(dd^c&(u_1-u_2))^n|_{\cal O}\\&=
\sum_{p=0}^nC_n^p(-1)^p(dd^c\max(u_1+v_2,u_2+v_1))^{n-p}\wedge(dd^c(u_2+v_2))^p|_{\cal O}\\
&=(dd^c\max(u_1-u_2,v_1-v_2))^n|_{\cal O}.
\end{align*}
Similarly we have
\[(dd^c(v_1-v_2))^n|_{\cal O}=
(dd^c\max(u_1-u_2,v_1-v_2))^n|_{\cal O},
\]
and the proof is complete.
\end{proof}

Theorem \ref{thm1.6} will enable us in Section \ref{sec3} to define the Monge-Amp\`ere operator for bounded
 ${\cal F}$-plurisubharmonic functions.

\section{\label{sec3} Positivity of the Monge-Amp\`ere operator for $\calf$ plurisubharmonic functions}

In this section we will show that the Monge-Amp\`ere mass of a finite
${\cal F}$-plurisubharmonic function defined on an ${\cal F}$-open set can be defined and is positive.

\begin{lemma}\label{lem3.1}  Let $(f_j)$ be a monotonically increasing, respectively decreasing,  sequence of $\calf$-plurisubharmonic functions that is $\calf$-locally uniformly bounded from above, respectively from below, on an $\calf$-open set $U\subset \Omega$.
Then for all $z\in U$, one can find an
${\cal F}$-neighborhood ${\cal O}$ of $z$ contained in an open set $B\subset\Omega$, a plurisubharmonic function $\Phi$ on $B$, and an increasing, respectively decreasing, sequence of plurisubharmonic functions
$(u_j)$ on $B$ such that
$f_j=u_j-\Phi$ on ${\cal O}$.
\end{lemma}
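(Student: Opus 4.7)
The plan is to invoke Lemma~\ref{lem2.6} to reduce to an $\calf$-local difference representation $f_j = v_j - \Phi$ on an $\calf$-neighborhood of $z$ with PSH $v_j, \Phi$ defined on a Euclidean ball $B \ni z$, and then massage the $v_j$'s into a monotonic PSH sequence $(u_j)$ on all of $B$ without destroying the identity $u_j - \Phi = f_j$ on $\cal O$.

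First I would apply Lemma~\ref{lem2.6} to the $\calf$-locally uniformly bounded family $(f_j)$ to produce $B = B(z,r) \subset \Omega$, an $\calf$-open $\cal O$ with $z \in \cal O \subset B$, a uniformly bounded sequence $(v_j) \subset \psh(B)$, and a single $\Phi \in \psh(B)$ such that $f_j = v_j - \Phi$ on $\cal O$. Since $\Phi$ does not depend on $j$, the monotonicity of $(f_j)$ on $\cal O$ is inherited by $(v_j)$ on $\cal O$ (but not necessarily on the ambient $B$). In the increasing case I would then take $u_j = \max(v_1,\ldots,v_j) \in \psh(B)$. Then $(u_j)$ is increasing on all of $B$, and on $\cal O$ it equals $v_j$ because $v_1 \le \cdots \le v_j$ there, so $u_j - \Phi = f_j$ on $\cal O$. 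No quasi-everywhere subtlety arises since the max of finitely many PSH functions is PSH exactly, not merely q.e.

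In the decreasing case the natural candidate is $u_j = \bigl(\sup_{k \ge j} v_k\bigr)^*$, the Euclidean USC regularization on $B$. Uniform boundedness forces $u_j \in \psh(B)$, and $(u_j)$ is clearly decreasing. On $\cal O$ the decreasing sequence $(v_k)_{k \ge j}$ has pointwise supremum $v_j$, and the classical Choquet--Bedford--Taylor fact that a locally bounded upper envelope of PSH functions coincides with its USC regularization off a pluripolar set gives $u_j = v_j$ on $\cal O \setminus E_j$ for some pluripolar $E_j$. To upgrade this q.e.\ equality to everywhere equality on $\cal O$, I would note that both $u_j - \Phi$ and $f_j$ are finite and $\calf$-continuous on $\cal O$ (PSH and $\calf$-plurisubharmonic functions are both $\calf$-continuous, as recalled in the introduction), so $\{u_j - \Phi = f_j\}$ is $\calf$-closed in $\cal O$; since a nonempty $\calf$-open set has positive Lebesgue measure while $E_j$ is Lebesgue null, pluripolar sets have empty $\calf$-interior, making $\cal O \setminus E_j$ \ $\calf$-dense in $\cal O$ and forcing $\{u_j - \Phi = f_j\} = \cal O$. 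This is the same pattern used in the proof of Corollary~\ref{cor2.7}.

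The hard part is the decreasing case. The naive greatest-PSH-minorant envelope on $B$, subject to $\le v_k$ on $\cal O$ for $k \le j$, does not in general reproduce $v_j$ on $\cal O$, because constraints coming from the $v_k$ on $B \setminus \cal O$ propagate back into $\cal O$ via plurisubharmonicity. The USC-regularized supremum circumvents this and yields the correct decreasing PSH sequence on $B$, but only quasi-everywhere agreement with $v_j$ on $\cal O$; the step promoting q.e.\ agreement to pointwise agreement is the crux and leans on $\calf$-continuity together with the absence of $\calf$-interior in pluripolar sets.
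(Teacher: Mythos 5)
Your proposal is correct in substance but takes a genuinely different route from the paper. The paper's entire proof is the remark that the lemma ``follows immediately from the proof of \cite[Theorem 2.4]{EFW}'': there the function $v$ with $f=v-\Phi$ is produced by an envelope/balayage construction that is monotone in the data $f$, so applying it to a monotone sequence $(f_j)$ with the \emph{same} $B$, $\cal O$ and $\Phi$ automatically yields a monotone sequence $(v_j)$, and nothing further is needed. You instead treat Lemma~\ref{lem2.6} as a black box (so the $v_j$ are monotone only on $\cal O$, not on $B$) and restore monotonicity afterwards: $\max(v_1,\dots,v_j)$ in the increasing case, $(\sup_{k\ge j}v_k)^*$ in the decreasing case, followed by the q.e.-to-everywhere upgrade via $\calf$-continuity and the fact that pluripolar sets have empty $\calf$-interior. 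That upgrade is sound and is indeed the same pattern as in Corollary~\ref{cor2.7}; what your route buys is independence from the internals of the external reference, at the cost of the extra regularization/pluripolar-exceptional-set step that the paper's route avoids entirely. One caveat: in the increasing case the hypothesis gives only a uniform upper bound, and $f_1$ (hence the whole family) need not be $\calf$-locally bounded \emph{below} at points of the pluripolar set $\{f_1=-\infty\}$, whereas Lemma~\ref{lem2.6} as stated requires two-sided $\calf$-local uniform bounds; so your reduction has a small gap there, which the paper's direct appeal to the envelope construction (needing only an upper bound) does not have. Since only the decreasing, bounded case is used later (Theorem~\ref{thm3.4}), this does not affect the applications.
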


\begin{proof} The lemma follows immediately from the proof of \cite[Theorem 2.4]{EFW}.
\end{proof}

\begin{remark} The lemma remains valid for increasing, respectively decreasing, directed families of $\calf$-plurisubharmonic functions that are $\calf$-locally bounded from above, respectively below.
\end{remark}
The proof of the following lemma is inspired by \cite[Lemma 4.1]{BT}.
\begin{lemma} \label{lem3.2} Let $\Omega$ be open in
${\CC}^n$, $n\ge 1$, let ${\cal O}\subset \Omega$ be a plurifine open subset, and let
$(u_j^1)$ and $(u_j^2)$ be two monotone sequences of plurisubharmonic
functions that are  bounded in $\Omega$, each converging to a bounded plurisubharmonic
function $u^1$, respectively $u^2$ on $\Omega$.
If $(dd^c(u_j^1-u_j^2))^n|_{\cal O}\ge 0$, then
$(dd^c(u_1-u_2))^n|_{\cal O}\ge 0$.
\end{lemma}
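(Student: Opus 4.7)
My plan is to combine the Cegrell--Wiklund multilinear expansion, the Bedford--Taylor monotone convergence theorem for mixed Monge--Amp\`ere masses, and a max-truncation trick in the spirit of Propositions~\ref{prop1.3} and~\ref{prop1.5}. First I would apply formula~\eqref{CeWi} to both $(dd^c(u_j^1-u_j^2))^n$ and $(dd^c(u^1-u^2))^n$, rewriting them as alternating sums of mixed products $(dd^cu_j^1)^{n-p}\wedge(dd^cu_j^2)^p$ and $(dd^cu^1)^{n-p}\wedge(dd^cu^2)^p$ respectively. By the Bedford--Taylor monotone convergence theorem for locally uniformly bounded plurisubharmonic functions, each such mixed product converges weakly as a current to its counterpart in the limit. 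Summing with alternating signs shows that the signed measures $\mu_j:=(dd^c(u_j^1-u_j^2))^n$ converge weakly on $\Omega$ to $\mu:=(dd^c(u^1-u^2))^n$.

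To transfer positivity from $\cal O$ through this weak limit, I would fix $k\ge 1$ and set
\[\phi_{j,k}:=\max\bigl(u_j^1-u_j^2,\,u^1-u^2-\tfrac{1}{k}\bigr),\]
viewed as a difference of two bounded plurisubharmonic functions by adding $u^2+u_j^2$ inside both arguments of the max. On the plurifine open set $W_{j,k}:=\{u_j^1-u_j^2>u^1-u^2-\tfrac{1}{k}\}$, Proposition~\ref{prop1.5} gives $(dd^c\phi_{j,k})^n=(dd^c(u_j^1-u_j^2))^n$, and the latter is $\ge 0$ on $\cal O\cap W_{j,k}$ by hypothesis. Expanding $(dd^c\phi_{j,k})^n$ via Cegrell--Wiklund and applying Bedford--Taylor term by term, one checks that for each fixed $k$ the signed measures $(dd^c\phi_{j,k})^n$ converge weakly as $j\to\infty$ to $\mu$. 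Since $W_{j,k}$ exhausts $\cal O$ pointwise as $j\to\infty$, and the mixed Monge--Amp\`ere masses do not charge pluripolar sets, a quasi-Lindel\"of covering of $\cal O$ by such level sets, together with a careful accounting of pluripolar exceptional sets, yields $\mu|_\cal O\ge 0$.

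The main obstacle is precisely this last step: weak convergence of signed measures does not automatically preserve positivity on arbitrary Borel sets, so one must combine the max identities of Propositions~\ref{prop1.3}--\ref{prop1.5}, the non-pluripolar charging property of mixed Monge--Amp\`ere masses of bounded plurisubharmonic functions, and the quasi-Lindel\"of property of $\calf$ to reduce the plurifine-open condition to something testable at the weak limit.
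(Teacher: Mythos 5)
You correctly identify the crux --- that weak convergence of the signed measures against continuous test functions cannot detect positivity on a plurifine open set --- but your proposal stops exactly there without resolving it, and the max-truncation machinery you introduce does not close the gap. Knowing that $(dd^c\phi_{j,k})^n\ge 0$ on the moving sets ${\cal O}\cap W_{j,k}$ and that $(dd^c\phi_{j,k})^n\rightharpoonup\mu$ weakly leaves you facing precisely the same obstruction you started with, now complicated by the fact that the sets $W_{j,k}$ depend on $j$ and that the monotonicity needed to apply the convergence theorem to $\max(u_j^1+u^2,\,u_j^2+u^1-\tfrac1k)$ is not guaranteed when $(u_j^1)$ and $(u_j^2)$ are monotone in opposite directions. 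A ``careful accounting of pluripolar exceptional sets'' is named but not supplied, and it is not clear how it could be supplied along this route.

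The missing idea, which is what the paper uses, is the stronger form of the Bedford--Taylor convergence theorem \cite[Theorem 3.2]{BT}: for monotone, locally uniformly bounded sequences of plurisubharmonic functions, the mixed Monge--Amp\`ere currents converge when integrated against bounded \emph{quasi-continuous} test functions, not merely continuous ones. By the quasi-Lindel\"of property one reduces, up to a pluripolar set (negligible for these measures), to the case ${\cal O}=B\cap\{\psi>0\}$ with $\psi$ plurisubharmonic near $\overline B$; one then manufactures a quasi-continuous $\tilde\psi\ge 0$ with compact support and $\{\tilde\psi>0\}={\cal O}$. Since $\tilde\psi$ vanishes off ${\cal O}$ and the hypothesis gives positivity on ${\cal O}$, each $\int\tilde\psi\,(dd^c(u_j^1-u_j^2))^n$ is nonnegative, and the quasi-continuous convergence theorem (applied term by term in the expansion \eqref{CeWi}) passes this to the limit:
\[
0\le\lim_{j\to\infty}\int\tilde\psi\,(dd^c(u_j^1-u_j^2))^n=\int\tilde\psi\,(dd^c(u^1-u^2))^n.
\]
Replacing $\tilde\psi$ by $f\tilde\psi$ for arbitrary nonnegative $\calf$-continuous $f$ shows $\tilde\psi\,(dd^c(u^1-u^2))^n\ge 0$ as a measure, whence positivity on $\{\tilde\psi>0\}={\cal O}$. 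Without invoking convergence against quasi-continuous test functions, your argument cannot be completed; with it, the max-truncation step becomes unnecessary.
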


\begin{proof} Observe that $(dd^c(u_j^1-u_j^2))^n$ and $(dd^c(u_1-u_2))^n$ are defined on $\Omega$ by \eqref{CeWi}. By the quasi-Lindel\"of property of the plurifine topology we can write  ${\cal O}$ as a pluripolar set $E$ joined with a countable union of $\calf$-open sets of the form $B\cap \{\psi>0\}$, where $B$ is an open ball in
 ${\CC}^n$, $B\subset {\overline B}\subset \Omega$, and
$\psi$ plurisubharmonic in a neighborhood of $\overline B$. As
pluripolar sets are neglegible for the Monge-Amp\`ere mass of bounded plurisubharmonic
 functions, it suffices to show the result for
 ${\cal O}=B\cap \{\psi>0\}$. We proceed as follows. Multiplying $\psi$ with a smooth cut-off function that is positive on $B$, we find
an  ${\cal F}$-continuous function ${\tilde \psi}\ge 0$  with compact support such that
${\cal O}=\{z\in \Omega; \tilde \psi(z)>0\}$.
Then, according to \cite[Theorem 3.2]{BT}, cf.\ also \cite[Lemma 4.1]{BT},
\[0\le \lim_{j\to \infty}\int {\tilde \psi}(dd^c(u_j^1-u_j^2))^n=
\int  {\tilde \psi}(dd^c(u_1-u_2))^n.\]
 When we replace ${\tilde \psi}$ by
$f{\tilde \psi}$, where $f$ is any
${\cal F}$-continuous functions $\ge 0$ on $\Omega$, this inequality remains valid. That is,
$\int f{\tilde \psi}(dd^c(u_1-u_2))^n\ge 0$. Hence
${\tilde \psi}(dd^c(u_1-u_2))^n\ge 0$, and it follows that
$(dd^c(u_1-u_2))^n|_{\cal O}\ge 0$.
\end{proof}


\begin{theorem}\label{thm3.4} Let $f$ be in
$\FCPSH$ on an ${\cal F}$-open set $U$. Then $(dd^cf)^n$ can be defined on $U$ as a positive measure.
\end{theorem}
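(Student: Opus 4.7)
The plan is to establish positivity of $(dd^cf)^n$ $\calf$-locally at each point of $U$, and then to patch the local definitions into a global positive Borel measure via the quasi-Lindel\"of property of $\calf$ together with the fact that Monge-Amp\`ere masses of bounded plurisubharmonic functions put no mass on pluripolar sets.

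First I would fix $z\in U$ and exploit $f\in\FCPSH(U)$ to obtain a compact $\calf$-neighborhood $K$ of $z$ and a sequence $(\varphi_j)$ of continuous plurisubharmonic functions, each defined on a Euclidean neighborhood of $K$, with $\varphi_j\to f$ uniformly on $K$. Passing to a subsequence with $\sup_K|\varphi_{j+1}-\varphi_j|\le 2^{-j}$ and putting $g_j:=\varphi_j+2^{1-j}$ yields a sequence of continuous plurisubharmonic functions that is decreasing on $K$ and still converges uniformly to $f$ there. Restricted to the $\calf$-interior of $K$, $(g_j)$ is an $\calf$-locally uniformly bounded decreasing sequence of $\calf$-plurisubharmonic functions.

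Next I would apply Lemma~\ref{lem3.1} to $(g_j)$ to produce an open ball $B\subset\Omega$, an $\calf$-neighborhood ${\cal O}$ of $z$ with ${\cal O}\subset B$, a plurisubharmonic function $\Phi$ on $B$, and a decreasing sequence $(u_j)$ of plurisubharmonic functions on $B$, which may all be taken bounded as the construction via Lemma~\ref{lem2.6} furnishes, such that $g_j=u_j-\Phi$ on ${\cal O}$. Since $g_j$ is plurisubharmonic on a Euclidean neighborhood of ${\cal O}$ and equals $u_j-\Phi$ there, Theorem~\ref{thm1.6} gives
\[(dd^c(u_j-\Phi))^n|_{\cal O}=(dd^cg_j)^n|_{\cal O}\ge 0.\]
Setting $u:=\lim u_j$, a bounded plurisubharmonic function on $B$, and applying Lemma~\ref{lem3.2} with the monotone sequences $u_j^1=u_j$ (decreasing) and $u_j^2=\Phi$ (constant), the positivity passes to the limit: $(dd^c(u-\Phi))^n|_{\cal O}\ge 0$. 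Because $f=u-\Phi$ on ${\cal O}$, Theorem~\ref{thm1.6} shows that this measure depends only on $f|_{\cal O}$ and is therefore the natural definition of $(dd^cf)^n|_{\cal O}$.

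To build a global positive measure, I would apply the quasi-Lindel\"of property of $\calf$ to extract from the cover $\{{\cal O}_z\}_{z\in U}$ a countable subfamily $({\cal O}_i)$ with $U\setminus\bigcup_i{\cal O}_i$ pluripolar. On each overlap the two candidates agree by Theorem~\ref{thm1.6}, and each local candidate, being a finite alternating sum of mixed Monge-Amp\`ere currents of bounded plurisubharmonic functions, assigns no mass to pluripolar sets by Bedford-Taylor. These compatibilities let the local positive measures be glued into a single positive Borel measure on $U$, which is declared to be $(dd^cf)^n$. The main obstacle I expect is aligning the approximants on a \emph{common} open set: the individual $g_j$ live on varying Euclidean neighborhoods of $K$, whereas Lemma~\ref{lem3.2} needs plurisubharmonic functions sharing one domain; Lemma~\ref{lem3.1} is the technical device resolving this, at the price of restricting $K$ to a possibly smaller $\calf$-neighborhood ${\cal O}\subset B$, and checking that this restriction does no harm is the heart of the argument.
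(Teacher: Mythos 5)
Your proposal is correct and follows essentially the same route as the paper: make the uniform approximants decreasing (you do this by a subsequence and geometric constants, the paper by shrinking neighborhoods and adding small constants, which is immaterial), apply Lemma~\ref{lem3.1} to write everything as $u_j-\Phi$ on a common ball, use Theorem~\ref{thm1.6} for $(dd^c(u_j-\Phi))^n|_{\cal O}\ge 0$, pass to the limit with Lemma~\ref{lem3.2}, and glue via quasi-Lindel\"of and the fact that these measures do not charge pluripolar sets. No substantive differences.
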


\begin{proof} Let $z\in U$. There exists a compact ${\cal F}$-neighborhood $K=K_z$ of $z$
and functions $f_1,f_2,\ldots$ that are
finite, continuous and plurisubharmonic in open neighborhoods
$O_1,O_2,\ldots$ of $K$, such that the sequence
$(f_j|_K)$ converges to $f$ uniformly on $K$. Take any such $K$ and $f_j$.
The functions $f_j$ are continuous, hence after shrinking the open $O_j$ and adapting the functions $f_j$ by small constants, we can assume that for every $j$
$O_{j+1}\subset O_j$ and $f_{j+1}\le f_j$ on $O_{j+1}$.
Next it follows from Lemma \ref{lem3.1} that we can find an open
$B$ containing $z$,  two functions
$u$ and $\Phi$ in $\psh(B)$ and a decreasing sequence  $(u_j)$ of
 plurisubharmonic functions on $B$ such that on some $\calf$-neighborhood ${\cal O}_z\subset B\cap U$
$f_j=u_j-\Phi$ and $f=u-\Phi$. For every $j$, we have
$(dd^c(u_j-\Phi))^n|_{{\cal O}_z}\ge 0$. As the sequence $(u_j)$
decreases we conclude in view of Lemma \ref{lem3.2} that
\[\label{eqkr1} (dd^cf)^n|_{{\cal O}_z}=(dd^c(u-\Phi))^n|_{{\cal O}_z}\ge 0.\]

Theorem \ref{thm1.6} shows that $(dd^cf)^n|_{{\cal O}_z}$ is independent of the choice of $U$ and $\Phi$.
Now, by the
quasi-Lindel\"of property of the plurifine topology, there exist a sequence
$(z_j)$ of points in $U$ and a pluripolar subset
$P$ of $U$ such that
\[U=\cup_j{\cal O}_{z_j}\cup P.\] Let $u^{z_j}$, $\Phi^{z_j}$
be the corresponding plurisubharmonic functions.
Because the measures $(dd^c(u^{z_j}-\Phi^{z_j}))^n$ do not
have mass on pluripolar sets, we can define a signed measure  $\mu$ on
$U$ by putting for every $A\in \QB(U)$,
\begin{equation}\label{eq1.6} \mu(A)=\int_{{\cal O}_{z_0}}1_A(dd^cu)^n+
\sum_{k\ge 1}\int_{{\cal O}_{z_k}\setminus (\cup_{0\le i\le k-1} {\cal O}_{z_i})}
1_A(dd^cu)^n.
\end{equation}
By Theorem  \ref{thm1.6}
the measure $\mu$ defined by \eqref{eq1.6}
$A\in \QB(U)$ is independent of the choice of the sequence
$({\cal O}_{z_j})$ and the functions $u^{z_j}$ and $\Phi^{z_j}$.
\end{proof}
\begin{definition}\label{def1.6}
We denote the signed measure $\mu$ on the $\QB(U)$ thus defined, by
$(dd^cu)^n$ and call it the Monge-Amp\`ere measure associated to $u$.
The operator $u\in \FCPSH(U)\mapsto
(dd^c u)^n$ will be called the
Monge-Amp\`ere operator.
\end{definition}

\begin{theorem}\label{thm3.5} Let $U$ be an $\cal F$-open set and let $f\in \FPSH(U)$ be finite. Then $(dd^cf)^n$ can be defined and is a positive measure on $U$.
\end{theorem}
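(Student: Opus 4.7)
The plan is to combine the $\calf$-local construction of $(dd^cf)^n$ via the Cegrell--Wiklund formula \eqref{CeWi} with the positivity result for $\FCPSH$ functions in Theorem \ref{thm3.4}, bridged by Theorem \ref{thm2.8} which places $f$ into $\FCPSH$ away from a pluripolar set.

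First I would construct $(dd^cf)^n$ as a signed measure on $U$. Because $f$ is finite and $\calf$-plurisubharmonic, it is $\calf$-locally bounded, so by \cite{EW2} every $z\in U$ admits an $\calf$-neighborhood ${\cal O}_z$ contained in some Euclidean open $B_z\subset\Omega$, together with bounded plurisubharmonic functions $v_1^z,v_2^z$ on $B_z$ satisfying $f=v_1^z-v_2^z$ on ${\cal O}_z$. Define
\[
(dd^cf)^n|_{{\cal O}_z}:=(dd^c(v_1^z-v_2^z))^n|_{{\cal O}_z},
\]
the right-hand side being the signed current \eqref{CeWi}. Theorem \ref{thm1.6} asserts that this assignment does not depend on the choice of representation. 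Using the quasi-Lindel\"of property of the plurifine topology I pick a countable subcover $\{{\cal O}_{z_k}\}$ and a pluripolar set $P$ with $U=P\cup\bigcup_k{\cal O}_{z_k}$, and patch the local signed measures into a single signed measure $\mu$ on $\QB(U)$ via the disjointification formula \eqref{eq1.6}. Because each $(dd^c(v_1^{z_k}-v_2^{z_k}))^n$ is a finite linear combination of mixed Monge--Amp\`ere currents of bounded plurisubharmonic functions, and such currents do not charge pluripolar sets (Bedford--Taylor), the measure $\mu$ is independent of the chosen cover and local representations, and $\mu(P)=0$.

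Next I would verify positivity by comparing $\mu$ with the measure produced by Theorem \ref{thm3.4}. By Theorem \ref{thm2.8} there is an $\calf$-closed pluripolar set $E\subset U$ such that $f\in\FCPSH(U\setminus E)$, and Theorem \ref{thm3.4} supplies a positive measure $\nu$ on $U\setminus E$ equal to $(dd^cf)^n$ there. Around each $w\in U\setminus E$, the proof of Theorem \ref{thm3.4} yields $\calf$-locally a representation $f=u-\Phi$ with $u,\Phi$ bounded plurisubharmonic on an open ball $B_w$, and Theorem \ref{thm1.6} applied to the pair of representations $f=v_1^{z_k}-v_2^{z_k}$ and $f=u-\Phi$ forces $\mu=\nu$ on the relevant $\calf$-neighborhood. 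Hence $\mu=\nu\ge 0$ on $U\setminus E$, and since $\mu(E)=0$ we conclude $\mu\ge 0$ on all of $U$.

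The principal obstacle, and the one place where cancellation could conceivably produce a nonpositive contribution, is the compatibility between the representations used to build $\mu$ and the representations used in Theorem \ref{thm3.4}; this is precisely the content of Theorem \ref{thm1.6}, which is exactly the reason that result was proved in Section \ref{sec1}. Everything else is a routine patching argument enabled by the quasi-Lindel\"of property together with the standard absence of Monge--Amp\`ere mass on pluripolar sets for bounded plurisubharmonic functions.
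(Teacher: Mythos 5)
Your proposal is correct and follows essentially the same route as the paper: Theorem \ref{thm2.8} to reduce to the $C$-strongly $\calf$-plurisubharmonic case off a pluripolar set, Theorem \ref{thm3.4} for positivity there, Theorem \ref{thm1.6} for independence of representations, and the quasi-Lindel\"of property together with the fact that Monge--Amp\`ere currents of bounded plurisubharmonic functions do not charge pluripolar sets. The only difference is organizational --- you build the global signed measure for a general finite $f\in\FPSH(U)$ before checking positivity, whereas the paper performs that patching inside the proof of Theorem \ref{thm3.4} and then simply extends by zero across the exceptional set --- but the ingredients and logic are identical.
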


\begin{proof} According to Theorem \ref{thm2.8}, there exists a
pluripolar ${\cal F}$-closed $F\subset \Omega$ such that
$f$ is $C$-strongly ${\cal F}$-plurisubharmonic on  $U\setminus F$. Then by the previous theorem,  $(dd^cf)^n$ to
$U\setminus F$ is a positive measure. Because pluripolar sets will not be charged by $(dd^c u-\Phi)^n$ when $u$, $\phi$ are bounded plurisubharmonic functions,  we can define $(dd^cf)^n(F)=0$. The resulting measure is independent of the choice of $F$. We conclude that $(dd^cf)^n$  is a well defined positive measure.
\end{proof}

\begin{remark}\label{app4.10} Let $f$ be a finite plurisubharmonic function
on a  Euclidean domain $\Omega$ in ${\CC^n}$. Then $f$ is in particular in $\FPSH(\Omega)$ and finite. It is a consequence of the previous theorem that the
Monge-Amp\`ere mass $(dd^cf)^n$ is a well defined positive measure, which is easily seen to coincide with the
\emph{nonpolar part} $NP(dd^cf)^n$, of the Monge-Amp\`ere measure defined in \cite[P.236]{BT}. This measure is in general not a Radon measure, i.e.\ not Euclidean locally finite. It is, however, $\calf$-locally finite. In general $f$ may well not belong to ${\cal D}(\Omega)$, the domain of MA in the sense of B\l ocki
cf.~\cite{B}. In fact, \AA hag, Cegrell and Hiep have an example of a finite subharmonic function which is not in ${\cal D}(\Omega)$, cf.~\cite{Cegrell}.

Completely analogous to Bedford and Taylor, cf.~\cite[P.236]{BT} we may define the \emph{non-polar part} $NP(dd^cf)^n$ as zero on $\{f=-\infty\}$ and by
\[\int_E NP(dd^cf)^n=\lim_{j\to\infty}\int_E (dd^c(\max(f,-j))^n.\]
\end{remark}

The following  theorem extends the result of Bedford and Taylor \cite[Theorem 1.1]{BT} to
${\cal F}$-plurisubharmonic functions on ${\cal F}$-open sets.

\begin{theorem}\label{thm1.7} Let $u$ and $v$ be two finite
${\cal F}$-plurisubharmonic functions on an ${\cal F}$-open set $U\subset\Omega$, with $\Omega$ open in $\CC^n$. If $u=v$ on a ${\cal F}$-open ${\cal O}\subset U$, then
\[(dd^cu)^n|{\cal O}=(dd^cv)^n|{\cal O}.\]
\end{theorem}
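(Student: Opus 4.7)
The measure $(dd^c f)^n$ for a finite $f\in \FPSH(U)$ is built $\calf$-locally: at each point $z\in U$ one uses \cite[Theorem 3.1]{EW2} (as in Lemma \ref{lem3.1} and the opening of Section \ref{sec1}) to write $f=a-c$ with $a,c$ bounded plurisubharmonic on some Euclidean ball $B\supset \mathcal O_z$ for a suitable $\calf$-neighbourhood $\mathcal O_z$ of $z$, and sets $(dd^c f)^n|_{\mathcal O_z}=(dd^c(a-c))^n|_{\mathcal O_z}$ interpreted via \eqref{CeWi}; independence of the decomposition is exactly the content of Theorem \ref{thm1.6}. The plan is therefore, at each point of $\mathcal O$, to exhibit a single $\calf$-neighbourhood on which both $u$ and $v$ admit such decompositions with respect to the same ball $B$, invoke Theorem \ref{thm1.6}, and then patch together via the quasi-Lindel\"of property.

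Fix $z\in\mathcal O$. Applying the local representation separately to $u$ and to $v$ yields $\calf$-neighbourhoods $\mathcal O^u_z,\mathcal O^v_z\subset U$ of $z$, balls $B_u,B_v$, and bounded plurisubharmonic functions $a,c$ on $B_u$, $b,d$ on $B_v$ with $u=a-c$ on $\mathcal O^u_z$ and $v=b-d$ on $\mathcal O^v_z$. Put $B=B_u\cap B_v$ and $\mathcal O_z=\mathcal O\cap\mathcal O^u_z\cap\mathcal O^v_z\cap B$; this is an $\calf$-neighbourhood of $z$ contained in $\mathcal O$, the restrictions $a,b,c,d$ are all bounded plurisubharmonic on the common domain $B$, and $a-c=u=v=b-d$ on $\mathcal O_z$. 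Theorem \ref{thm1.6} applied with $\Omega=B$ then gives
\[
(dd^c u)^n|_{\mathcal O_z}=(dd^c(a-c))^n|_{\mathcal O_z}=(dd^c(b-d))^n|_{\mathcal O_z}=(dd^c v)^n|_{\mathcal O_z}.
\]

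To pass from $\calf$-local to global, invoke the quasi-Lindel\"of property of the plurifine topology to cover $\mathcal O=\bigcup_j \mathcal O_{z_j}\cup P$ with $P$ pluripolar. On each $\mathcal O_{z_j}$ the two measures agree by the previous paragraph, and by the construction in Definition \ref{def1.6} and Theorem \ref{thm3.5} (ultimately resting on the Bedford--Taylor fact that bounded plurisubharmonic Monge--Amp\`ere masses do not charge pluripolar sets) neither $(dd^c u)^n$ nor $(dd^c v)^n$ charges $P$. Hence the equality extends to all of $\mathcal O$. The only mildly delicate step is the arrangement of a common ball $B$ and a common $\calf$-neighbourhood on which both decompositions are simultaneously valid, but this reduces to intersecting finitely many $\calf$-open sets and restricting plurisubharmonic functions; I do not anticipate any serious obstacle beyond these routine checks.
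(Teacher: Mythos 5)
Your argument is correct and follows essentially the same route as the paper: $\calf$-locally decompose both $u$ and $v$ as differences of bounded plurisubharmonic functions on a common domain, apply Theorem \ref{thm1.6} to equate the two Monge--Amp\`ere masses there, and patch via the quasi-Lindel\"of property using that the masses do not charge the residual pluripolar set. The paper's proof is just a terser version of this, with your "common ball" arrangement left implicit.
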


\begin{proof} Because of \cite[Theorem 3.1]{EW2} and the
quasi-Lindel\"of property of the plurifine topology
we can find bounded plurisubharmonic functions
$u_j^1$, $u_2^j$, $v_1^j$, and
$v_2^j$ defined on open neighborhoods of compact sets $K_j\subset\Omega$, ($j=1,2, \ldots$)  and a
pluripolar set $P$ such that ${\cal O}=\cup_j K_j\cup P$, and for every
$j$, $u=u_1^j-u_2^j$ and $v=v_1^j-v_2^j$  on an ${\cal F}$-open neighborhood ${V}_j$ of $K_j$.
Now we have
$(dd^c(u_1^j-u_2^j))^n|K_j=(dd^c(v_1^j-v_2^j))^n|K_j$
according to Theorem \ref{thm1.6}, and the proof is complete.
\end{proof}

\thebibliography{99}

\bibitem{AG} Armitage, D.H.\ \& S.J.\ Gardiner: \textit{Classical potential theory.} Springer Monographs in Mathematics, Springer-Verlag London, Ltd., London, 2001.

\bibitem{BT} Bedford, E.\ \&  B.A.\ Taylor: \textit{Fine topology,  \v Silov boundary and $(dd^{c})^{n}$},
 J.\ Funct.\ Anal.\ {\bf 72} (1987), 225--251.

\bibitem{B} Bourbaki, N.: \textit{Espaces vectoriels topologiques}, Chap. 3, 4, 5,
Actualit\'es industrielles et scientifiques, Hermann, Paris, 1966.

\bibitem{BL} B\l ocki, Z.: \textit{On the definition of the Monge-Amp\`ere operator in $\CC^2$}, Math.\ Ann.\ \textbf{328} (2004), no 3, 415--423.

\bibitem{CW} Cegrell, U.\ \& J.\ Wiklund: \textit{A Monge-Amp\`ere
norm for  $\delta$-plurisubharmonic functions},
Math.\ Scand.\ {\bf 97} (2005), no 2, 201--216.

\bibitem{Cegrell} Cegrell, U.: \textit{Oral Communication}.

\bibitem{DL} Deny, J.\ \& J.L.\ Lions: \textit{Les espaces du type Beppo-Levi},
Ann.\ Inst.\ Fourier, {\bf 5} (1954), 305--370.

\bibitem{D} Doob, J.L.: \textit{Classical potential theory and its
probabilistic counterpart}, Grundlehren Math.\ Wiss.\ \textbf{262},
Springer, Berlin, 1984.

\bibitem{EK} El Kadiri, M.: \textit{Fonctions finement plurisousharmoniques et topologie plurifine},  Rend.\ Accad.\ Naz.\ Sci.\ XLMem.\ Mat.\ Appl.\ (5) {\bf 27},  (2003), 77--88.

\bibitem{EFW} El Kadiri, M., B.\ Fuglede, J.\ Wiegerinck: \textit{Plurisubharmonic
and holomorphic functions relative to the plurifine
topology}, J.\ Math.\ Anal.\ Appl.\  {\bf 381} (2011), No 2, 107--126.

\bibitem{E-W1} El Marzguioui, S.\ \& J. Wiegerinck: \textit{The pluri\-fine topology is locally connected},  Potential Anal.\ {\bf 25} (2006), no. 3, 283--288.

\bibitem{EW1} El Marzguioui, S.\ \& J. Wiegerinck: \textit{Connectedness in the plurifine
topology}, Functional Analysis and Complex Analysis, Istanbul 2007, 105-115, Contemp.\
Math.\ {\bf 481}, Amer.\ Math.\ Soc., Providence, RI, 2009.

\bibitem{EW2} El Marzguioui, S.\ \& J. Wiegerinck: \textit{Continuity
properties of finely plurisubharmonic functions},
Indiana Univ.\ Math.\ J., {\bf 59} (2010) no 5 1793--1800.

\bibitem{Fu71} Fuglede, B.: \textit{Connexion en topologie fine et balayage des mesures},
Ann.\ Inst.\ Fourier. Grenoble {\bf 21.3}  (1971),  227--244.

\bibitem{F1} Fuglede, B.: \textit{Finely harmonic functions}, Lecture Notes in Math.\ {\bf 289}, Springer, Berlin, 1972.

\bibitem{F2} Fuglede, B.: \textit{Fonctions BLD et fonctions finement surharmoniques}, S\'eminaire de th\'eorie du Potentiel no 6, pp 126--157, Lecture Notes in Math.\ {\bf 906}  Springer, Berlin 1982,

\bibitem{F6} Fuglede, B.: \textit{Fonctions finement holomorphes de plusieurs
variables -- un essai}, S\'eminaire d'Analyse
P. Lelong--P. Dolbeault--H. Skoda, 1983/85, pp. 133--145, Lecture
Notes in Math. \textbf{1198}, Springer, Berlin, 1986.


\bibitem{KN} Kelley, J.L.\ \& I.\ Namioka: \textit{Linear topological spaces},  The University Series in Higher Mathematics, D. Van Nostrand Co., Inc., Princeton, N.J. 1963.


\bibitem{Wie2012} Wiegerinck, J.: \textit{Plurifine potential theory} Ann.\ Polon.\ Math.\  {\bf 106}\ (2012), 275--292.

\end{document}